\documentclass[reqno]{amsart}
\usepackage[margin=3.5cm]{geometry}
\usepackage{xcolor}
\usepackage[T1]{fontenc}
\usepackage{amsfonts, amsthm, amssymb,verbatim,amscd,amsmath, blindtext}

\usepackage{multirow}
\usepackage{graphicx}
\usepackage{enumitem,yhmath}
\usepackage{amssymb}
\usepackage{ytableau}
\usepackage{tikz}
\usepackage{tikz-cd}
\usepackage{float, pifont}
\usepackage{mathtools}
\usepackage[pagebackref,colorlinks=true,citecolor=blue,linkcolor=black]{hyperref}

\setlength{\parskip}{0.1cm}
\setlength\parindent{0.5cm}

\def\Hom{\operatorname{Hom}}

\DeclareMathOperator{\characteristic}{char}
\DeclareMathOperator{\perm}{perm}
\DeclareMathOperator{\initial}{in}
\DeclareMathOperator{\depth}{depth}
\DeclareMathOperator{\on}{on}
\DeclareMathOperator{\off}{off}

\minCDarrowwidth25pt

\newtheorem{theorem}{Theorem}[section]
\newtheorem{lemma}[theorem]{Lemma}
\newtheorem{proposition}[theorem]{Proposition}
\newtheorem{corollary}[theorem]{Corollary}

\theoremstyle{definition}

\theoremstyle{remark}

\numberwithin{equation}{section}

\newcommand{\reqnomode}{\tagsleft@false\let\veqno\@@eqno}

\begin{document}

\title{Permanental ideals of symmetric matrices}

\author{Trung Chau}
\address{Chennai Mathematical Institute, Siruseri, Tamil Nadu, India}
\email{chauchitrung1996@gmail.com}

\keywords{permanent, permanental ideal, symmetric matrix, determinantal ideal, primary decomposition, Gr\"obner basis}

\subjclass[2020]{13C05; 13C40; 13P10; 15A15; 05E40}

\begin{abstract}
    In this article, we study the ideal generated by $2\times 2$ permanents of a symmetric matrix. We denote this ideal by $P_2(X)$ where $X$ is a symmetric matrix. We compute a Gr\"obner basis, dimension, depth, minimal primes, and a primary decomposition of $P_2(X)$. It can be seen that the answer is reliant on whether the characteristic of the base field is two, and thus these ideals constitute a class of ideals whose algebraic properties depend on characteristics of the base field.
\end{abstract}
\maketitle

\section{Introduction}

Let $X=(x_{ij})$ be an $n\times n$ matrix. The \emph{permanent} of $X$ is the polynomial
\[
\perm(X)=\sum_{\sigma} x_{1,\sigma(1)}\cdots x_{n,\sigma(n)},
\]
where the summation ranges over all the permutations of the set $\{1,\dots, n\}$. Permanents, loosely speaking, are determinants with sums instead of alternating sums. Despite the similarity in structures, their behaviors and properties are wildly different. A fundamental difference is that determinants are largely unchanged after an elementary row operation, while permanents are not. This leads to algorithms that compute determinants in polynomial time (e.g., one that uses the LU decomposition by Banachiewicz in 1938). On the other hand, the computation of the permanents of $(0,1)$-matrices is shown to be NP-hard \cite{VALIANT1979189}. The existence of an algorithm that computes permanents in polynomial time thus would imply that P=NP, the heart of complexity theory. Permanents, introduced by Cauchy and Binet in early 1800s, have since found applications in geometry, computer science, combinatorics, and probability theory. We refer to \cite{Aaronson:14, Fano,EGORYCHEV1981299} for some such applications and \cite{Minc} for a survey on the subject.

On the algebraic side, determinantal ideals of a matrix $X$ with indeterminates as entries, i.e., ideals generated by determinants of $t\times t$  square submatrices of $X$ in the polynomial ring $\Bbbk[X]$, where $t$ is a fixed integer and $\Bbbk$ is a field, have been intensively studied (see, e.g., \cite{BCRV, BV}). The cases when $X$ is generic \cite{Eagon1971InvariantTA}, symmetric \cite{Kutz74}, and Hankel \cite{CONCA2018111} are of special interests, and many algebraic properties of determinantal ideals have been obtained in these cases. On the other hand, \emph{permanental ideals}, i.e., ideals generated by permanents of $t\times t$  square submatrices of $X$ in the polynomial ring $\Bbbk[X]$, have not enjoyed the same level of popularity, partly due to their complexity. We denote this permanental ideal by $P_t(X)$. Some studied cases include $P_2(X)$ when $X$ is generic \cite{LS00, DK21} or Hankel \cite{GGS07}, $P_3(X)$ when $X$ is generic \cite{Kirkup} or Hankel \cite{GG06}, among other cases \cite{BCMV25,ELSW18,HMSW15}. It is important to note that when $\characteristic \Bbbk =2$, the ideal $P_t(X)$ is the same as the corresponding determinantal ideal. The same cannot be said when $\characteristic \Bbbk \neq 2$. Thus $P_t(X)$ represents a class of ideals whose properties depend on charactersitics of the base~field.

In this paper, we study the permanental ideal $P_2(X)$ in the polynomial ring $\Bbbk[X]$ where $X=(x_{ij})$ is an $n\times n$ symmetric matrix. Throughout this paper, we assume that $\characteristic \Bbbk \neq 2$. The properties of $P_2(X)$ turn out to be very different from their determinantal counterpart, and also from the case when $X$ is generic or Hankel. For example, we show that the generators of $P_2(X)$ do not form a Gr\"obner basis, unlike its determinantal counterpart \cite{CONCA1994406}. As another example, the permanental ideal $P_2(Y)$, where $Y$ is either generic or Hankel, has exactly one embedded associated prime, being the homogeneous maximal ideal \cite{GGS07,LS00}, while as we shall see in this paper, $P_2(X)$ has exactly $n+1$ embedded associated primes.

The paper is structured as follows. Section~\ref{sec:2} includes some basic results on monomials $P_2(X)$ contain. Section~\ref{sec:3} gives a Gr\"obner basis for $P_2(X)$ with respect to a lexicographical diagonal monomial ordering in Theorem~\ref{thm:Grobner-symmetric}. In section~\ref{sec:4}, the goal is to find all minimal primes of $P_2(X)$ (Theorem~\ref{thm:symmetric-minimal-primes})  and use them to compute $\sqrt{P_2(X)}$ (Proposition~\ref{prop:symmetric-generators-radical}). Finally, Section~\ref{sec:5} proves our main theorem, an irredundant primary decomposition of $P_2(X)$ in Theorem~\ref{thm:primary-decomposition}.

\section{Monomials in $P_2(X)$}\label{sec:2}

Ideals generated by $2\times 2$ subpermanents of a matrix with indeterminates as entries contain many monomials, a remarkable difference from determinantal ideals. We recall two results that illustrate this from \cite{LS00}. It is worth noting that even though generic matrices were the main focus in \cite{LS00}, the cited lemmas below work for any matrices. We provide a proof for completeness.

\begin{lemma}[{\cite[Lemma 2.1]{LS00}}]\label{lem:three-element-in-permanent}
	The ideal $P_2(X)$ contains all products of three entries of $X$, taken from three distinct columns and two distinct rows, or from two distinct columns and three distinct rows.
\end{lemma}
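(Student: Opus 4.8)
The plan is to reduce the claim to a single normal form by a pigeonhole argument, and then to exhibit the relevant triple product as an explicit $\Bbbk$-linear combination of multiples of $2\times 2$ subpermanents, dividing by $2$ at the very end; this is the only step where the hypothesis $\characteristic\Bbbk\neq 2$ (standing in this paper) is used.

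First I would observe that among any three entries lying in three distinct columns and two distinct rows, two of them must share a row; these two then lie in two distinct columns, and the third entry is forced to lie in the remaining row (otherwise all three entries would occupy a single row) and in the remaining column. Hence, after relabeling rows and columns, it suffices to prove $x_{ac}x_{ad}x_{be}\in P_2(X)$ whenever $a\neq b$ and $c,d,e$ are pairwise distinct.

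Next I would write down the three $2\times 2$ subpermanents of $X$ on rows $\{a,b\}$ and on the column pairs $\{c,e\}$, $\{d,e\}$, $\{c,d\}$, namely $g_1=x_{ac}x_{be}+x_{ae}x_{bc}$, $g_2=x_{ad}x_{be}+x_{ae}x_{bd}$, and $g_3=x_{ac}x_{bd}+x_{ad}x_{bc}$, each of which lies in $P_2(X)$ since $a\neq b$ and $c,d,e$ are distinct. Writing $M_1=x_{ac}x_{ad}x_{be}$, $M_2=x_{ad}x_{ae}x_{bc}$, $M_3=x_{ac}x_{ae}x_{bd}$, a direct expansion gives $x_{ad}g_1=M_1+M_2$, $x_{ac}g_2=M_1+M_3$, and $x_{ae}g_3=M_2+M_3$. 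Adding the first two and subtracting the third yields $2M_1=x_{ad}g_1+x_{ac}g_2-x_{ae}g_3\in P_2(X)$, and since $\characteristic\Bbbk\neq 2$ we conclude $x_{ac}x_{ad}x_{be}=M_1\in P_2(X)$.

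Finally, the second case — three entries from two distinct columns and three distinct rows — follows from the first applied to the transpose, using that the $2\times 2$ permanent is transpose-invariant, so $P_2(X^{T})=P_2(X)$ (for a symmetric $X$ this identity is automatic). I do not expect a genuine obstacle here; the only points requiring care are verifying that $g_1,g_2,g_3$ are honest generators of $P_2(X)$ — which is precisely where the distinctness of the column indices is used — and the small but indispensable appeal to $\characteristic\Bbbk\neq 2$ in the last step.
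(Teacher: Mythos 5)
Your proposal is correct and takes essentially the same approach as the paper: after reducing to the normal form $x_{ac}x_{ad}x_{be}$ with $a\neq b$ and $c,d,e$ distinct (the paper achieves this by a WLOG reduction to a $2\times 3$ matrix), your identity $2x_{ac}x_{ad}x_{be}=x_{ad}g_1+x_{ac}g_2-x_{ae}g_3$ is, up to relabeling, the very same signed combination of variable multiples of the three $2\times 2$ subpermanents on rows $a,b$ that the paper uses, and both proofs finish by dividing by $2$, with the transpose observation disposing of the second case exactly as the paper's implicit symmetry does.
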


\begin{proof}
    Without loss of generality, we can assume that $X$ is a $2\times 3$ matrix
    \[
    X=\begin{pmatrix}
        x & y & z\\
        w & u & v
    \end{pmatrix}
    \]
    and it suffices to show that $xyv\in P_2(X)$. Indeed, we have
    \[
    2xyv= x(yv+zu) - z(xu+yw)  + y(zw+xv) \in P_2(X), 
    \]
    and thus $xyv\in P_2(X)$, as claimed.
\end{proof}

\begin{lemma}[{\cite[Lemma 2.2]{LS00}}]\label{lem:three-element-in-permanent-2}
	The ideal $P_2(X)$ contains all products of the form $x_{i_1j_1}^2x_{i_2j_2}x_{i_3j_3}$ with distinct $i_1,i_2,i_3$ and distinct $j_1,j_2,j_3$.
\end{lemma}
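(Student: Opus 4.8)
The plan is to bootstrap from Lemma~\ref{lem:three-element-in-permanent} via a one-line algebraic identity, after first cutting down to a $3\times 3$ submatrix. The three indeterminates $x_{i_1j_1},x_{i_2j_2},x_{i_3j_3}$ all occur in the submatrix $X'$ of $X$ on the rows $\{i_1,i_2,i_3\}$ and the columns $\{j_1,j_2,j_3\}$, and every $2\times 2$ permanent of $X'$ is also a $2\times 2$ permanent of $X$; hence $P_2(X')\subseteq P_2(X)$, and after relabelling rows and columns it suffices to show that $x_{11}^2x_{22}x_{33}\in P_2(X)$ when $X=(x_{ij})$ is a $3\times 3$ matrix of indeterminates. (As in Lemma~\ref{lem:three-element-in-permanent}, symmetry of $X$ plays no role.)

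The main step is then to write $x_{11}^2x_{22}x_{33}$ explicitly as a combination of a $2\times 2$ permanent and a degree-three monomial already known to lie in the ideal. I would use that $x_{11}x_{12}x_{33}\in P_2(X)$ by Lemma~\ref{lem:three-element-in-permanent} --- its three factors come from the two rows $1,3$ and the three columns $1,2,3$ --- and that $p:=x_{11}x_{22}+x_{12}x_{21}$ is a generator of $P_2(X)$, being the permanent of the $2\times 2$ submatrix on rows $\{1,2\}$ and columns $\{1,2\}$. The identity to verify is
\[
x_{11}^2x_{22}x_{33}=x_{11}x_{33}\cdot p-x_{21}\cdot(x_{11}x_{12}x_{33}),
\]
whose right-hand side is visibly an element of $P_2(X)$; expanding, the two copies of $x_{11}x_{12}x_{21}x_{33}$ cancel, leaving $x_{11}^2x_{22}x_{33}$.

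I do not anticipate a genuine obstacle: the only content is locating the right combination, and the single external input, Lemma~\ref{lem:three-element-in-permanent}, already encodes the one place where $\characteristic\Bbbk\neq 2$ is used (namely, dividing by $2$ in its proof). If one preferred not to quote Lemma~\ref{lem:three-element-in-permanent}, one could instead expand $x_{11}x_{12}x_{33}$ directly through the $2\times 3$ identity appearing in that proof and substitute, but routing through the lemma keeps the argument shortest.
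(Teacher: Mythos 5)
Your proof is correct and is essentially identical to the paper's: after reducing to the $3\times 3$ case, the paper uses the same identity $x_{11}^2x_{22}x_{33}=x_{11}x_{33}(x_{11}x_{22}+x_{12}x_{21})-x_{21}(x_{11}x_{12}x_{33})$ with $x_{11}x_{12}x_{33}\in P_2(X)$ by Lemma~\ref{lem:three-element-in-permanent}, just written in different variable names.
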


\begin{proof}
    Without loss of generality, we can assume that $X$ is a $3\times 3$ matrix
    \[
    X=\begin{pmatrix}
        x_1 & x_2 & x_3\\
        y_1 & y_2 & y_3\\
        z_1 & z_2 & z_3
    \end{pmatrix}
    \]
    and it suffices to show that $x_1^2y_2z_3\in P_2(X)$. Indeed,  by Lemma \ref{lem:three-element-in-permanent}, we have
    \[
    x_1^2y_2z_3= x_1z_3(x_1y_2+x_2y_1) - y_1(x_1x_2yz_3) \in P_2(X),
    \]
    as claimed.
\end{proof}

In the case when $X$ is symmetric, some more products of special forms are in $P_2(X)$.

\begin{lemma}\label{lem:three-element-symmetric}
    Let $X$ be a symmetric matrix. Then the ideal $P_2(X)$ contains all products of the form $x_{ij}^2x_{ik}$, $x_{ij}^2x_{kk}^2$, or $x_{ij}^3x_{kk}$ with distinct $i,j,k$.
\end{lemma}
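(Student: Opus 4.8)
The plan is to deduce all three families from the two results of \cite{LS00} recalled above, together with the one extra relation available when $X$ is symmetric: for distinct $i,j$ the polynomial
\[
\perm\begin{pmatrix} x_{ii} & x_{ij}\\ x_{ij} & x_{jj}\end{pmatrix}=x_{ii}x_{jj}+x_{ij}^{2}
\]
is the $2\times 2$ permanent of the principal submatrix on rows and columns $\{i,j\}$, hence lies in $P_2(X)$. Since Lemma~\ref{lem:three-element-in-permanent} and Lemma~\ref{lem:three-element-in-permanent-2} hold for an arbitrary matrix, I may apply them freely to submatrices of $X$, so no reduction to the $3\times 3$ case is needed. The three monomials are handled in the order listed, and the first one is reused in the proof of the third.

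For $x_{ij}^{2}x_{ik}$ I would argue directly from Lemma~\ref{lem:three-element-in-permanent}: by symmetry, $x_{ij}^{2}x_{ik}$ is the product of the entries of $X$ read off from the three distinct positions $(i,j)$, $(j,i)$, $(i,k)$, and these occupy the two rows $i,j$ and the three columns $i,j,k$, so the product lies in $P_2(X)$. For $x_{ij}^{2}x_{kk}^{2}$ I would multiply the displayed relation by $x_{kk}^{2}$, obtaining $x_{ij}^{2}x_{kk}^{2}+x_{ii}x_{jj}x_{kk}^{2}\in P_2(X)$; the monomial $x_{kk}^{2}x_{ii}x_{jj}$ has the shape $x_{i_1j_1}^{2}x_{i_2j_2}x_{i_3j_3}$ of Lemma~\ref{lem:three-element-in-permanent-2}, with repeated index $k$ and remaining indices $i,j$, all three distinct, so it lies in $P_2(X)$, and therefore so does $x_{ij}^{2}x_{kk}^{2}$.

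For $x_{ij}^{3}x_{kk}$ I would instead use the $2\times 2$ permanent of the submatrix of $X$ on rows $\{i,k\}$ and columns $\{j,k\}$:
\[
\perm\begin{pmatrix} x_{ij} & x_{ik}\\ x_{kj} & x_{kk}\end{pmatrix}=x_{ij}x_{kk}+x_{ik}x_{jk}\in P_2(X).
\]
Multiplying by $x_{ij}^{2}$ gives $x_{ij}^{3}x_{kk}+x_{ij}^{2}x_{ik}x_{jk}\in P_2(X)$, and the second summand lies in $P_2(X)$ because $x_{ij}^{2}x_{ik}\in P_2(X)$ by the first case; subtracting yields $x_{ij}^{3}x_{kk}\in P_2(X)$.

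The point requiring care is not any single computation but the choice of what to multiply by. The ``economical'' degree-three monomial $x_{ij}^{2}x_{kk}$ is \emph{not} in $P_2(X)$ — this is exactly why the statement features $x_{ij}^{2}x_{kk}^{2}$ and $x_{ij}^{3}x_{kk}$ rather than $x_{ij}^{2}x_{kk}$ — so one cannot simply cancel a common factor. The extra factor must instead be absorbed, either by trading $x_{ij}^{2}$ for $x_{ii}x_{jj}$ via the symmetric relation and then feeding a monomial with a repeated \emph{diagonal} index into Lemma~\ref{lem:three-element-in-permanent-2}, or by bringing in a permanent that genuinely involves the index $k$ and falling back on the already-proved case $x_{ij}^{2}x_{ik}\in P_2(X)$. (As in \cite{LS00}, all of this implicitly uses $\characteristic\Bbbk\neq 2$ through Lemmas~\ref{lem:three-element-in-permanent} and \ref{lem:three-element-in-permanent-2}.)
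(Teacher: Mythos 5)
Your proof is correct, and cases two and three take a genuinely different route from the paper. The paper handles all three monomials in one stroke by exploiting the fact that a repeated off-diagonal factor $x_{ij}^2$ can be read off from the two symmetric positions $(i,j)$ and $(j,i)$, so each of $x_{ij}^2x_{ik}$, $x_{ij}^2x_{kk}^2$, and $x_{ij}^3x_{kk}$ becomes a direct instance of Lemma~\ref{lem:three-element-in-permanent} or Lemma~\ref{lem:three-element-in-permanent-2}; for instance, $x_{ij}^3x_{kk}$ is $x_{ij}^2\cdot x_{ji}\cdot x_{kk}$, with distinct rows $i,j,k$ and distinct columns $j,i,k$. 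You use this symmetric-reading trick only for $x_{ij}^2x_{ik}$. For $x_{ij}^2x_{kk}^2$ you instead trade $x_{ij}^2$ for $x_{ii}x_{jj}$ via the principal $2\times 2$ permanent on rows and columns $\{i,j\}$, reducing to $x_{kk}^2x_{ii}x_{jj}\in P_2(X)$, which is Lemma~\ref{lem:three-element-in-permanent-2} applied to three diagonal entries with no symmetric reinterpretation needed. For $x_{ij}^3x_{kk}$ you bring in the off-diagonal permanent $x_{ij}x_{kk}+x_{ik}x_{jk}$, multiply by $x_{ij}^2$, and fall back on the first case. Both approaches are equally valid; the paper's is terser, while yours makes more visible exactly which permanent relations are doing the work, and correctly pinpoints the obstruction (that $x_{ij}^2x_{kk}\notin P_2(X)$, as one can read off from the Gr\"obner basis) that forces the extra factors $x_{kk}$ or $x_{ij}$ in the statement.
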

\begin{proof}
    It suffices to assume that $i=1,j=2,$ and $k=3$. We look at the positions of the variables in the submatrix of $X$ formed by its first three rows and three columns:
    \[
    \begin{pmatrix}
        * & x_{ij} & x_{ik}\\
        x_{ij} & * & *\\
        *&*&x_{kk}
    \end{pmatrix}.
    \]
    Thus $x_{ij}^2x_{ik}\in P_2(X)$ by Lemma~\ref{lem:three-element-in-permanent}, and $x_{ij}^2x_{kk}^2,x_{ij}^3x_{kk}\in P_x(X)$ by Lemma~\ref{lem:three-element-in-permanent-2}, as desired.
\end{proof}

\begin{lemma}\label{lem:quadratic-generator}
    Let $X$ be a symmetric matrix. Then then ideal $P_2(X)$ contains all products of the form $x_{ij}x_{kl}$  with distinct $i,j,k,l$.
\end{lemma}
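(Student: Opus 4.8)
The plan is to first reduce to a normalized case. Since $P_2(X)$ is unchanged under simultaneously permuting the rows and columns of the symmetric matrix $X$, I would relabel the four distinct indices and assume $(i,j,k,l)=(1,2,3,4)$, so that the task becomes showing $x_{12}x_{34}\in P_2(X)$. The key idea is to realize $x_{12}x_{34}$ as an explicit $\Bbbk$-linear combination of $2\times 2$ permanents of submatrices of $X$, exploiting the symmetry relations $x_{ab}=x_{ba}$ to make several such permanents share terms that subsequently cancel.

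Concretely, I would record the three permanents coming from the row/column index pairs $(\{1,3\},\{2,4\})$, $(\{1,4\},\{2,3\})$, and $(\{1,2\},\{3,4\})$. Using $x_{ab}=x_{ba}$, these are respectively $p_1=x_{12}x_{34}+x_{14}x_{23}$, $p_2=x_{12}x_{34}+x_{13}x_{24}$, and $p_3=x_{13}x_{24}+x_{14}x_{23}$, all of which lie in $P_2(X)$. Then $p_1+p_2-p_3=2x_{12}x_{34}$, so that $x_{12}x_{34}=\tfrac12(p_1+p_2-p_3)\in P_2(X)$; this is the only point at which the hypothesis $\characteristic\Bbbk\neq 2$ is used.

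Unlike Lemmas~\ref{lem:three-element-in-permanent}--\ref{lem:three-element-symmetric}, no auxiliary higher-degree monomial identities are needed, and the argument does not invoke the earlier lemmas at all: it is a bare three-term linear combination of generators. The only thing that requires care is the combinatorial bookkeeping --- one must check that, after the relabeling, the three chosen $2\times 2$ submatrices really do produce the stated permanents once symmetry is applied, and in particular that the ``partner'' monomials $x_{14}x_{23}$ and $x_{13}x_{24}$ are paired so as to cancel in $p_1+p_2-p_3$. I would verify this by expanding all three $2\times 2$ permanents in full before committing to the final signed identity.
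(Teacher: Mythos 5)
Your proof is correct and is essentially the same as the paper's: both take the three $2\times 2$ permanents on the index pairs $\{1,3\}\times\{2,4\}$, $\{1,4\}\times\{2,3\}$, and $\{1,2\}\times\{3,4\}$ (your $p_1,p_2,p_3$ are the paper's first, third, and second listed permanents, respectively), form the same signed linear combination yielding $2x_{ij}x_{kl}$, and divide by $2$. The normalization to $(1,2,3,4)$ and the explicit expansion are cosmetic differences only.
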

\begin{proof}
    We have
	\begin{align*}
		x_{ij}x_{kl}+x_{il}x_{jk}=\perm\begin{pmatrix}
			x_{ij}&x_{il}\\
			x_{kj}&x_{kl}
		\end{pmatrix}\in P_2(X),\\
		x_{ik}x_{jl}+x_{il}x_{jk}=\perm\begin{pmatrix}
			x_{ik}&x_{il}\\
			x_{jk}&x_{jl}
		\end{pmatrix}\in P_2(X),\\
		\text{and } x_{ij}x_{kl}+x_{ik}x_{jl}=\perm\begin{pmatrix}
			x_{ij}&x_{ik}\\
			x_{lj}&x_{lk}
		\end{pmatrix}\in P_2(X).
	\end{align*}
    Hence $2x_{ij}x_{kl}\in P_2(X)$. Since $2$ is invertible, we have $x_{ij}x_{kl}\in P_2(X)$.
\end{proof}

\section{A Gr\"obner basis of $P_2(X)$}\label{sec:3}

In this section, we compute a reduced Gr{\"o}bner basis for $P_2(X)$ with respect to any lexicographic (lex) diagonal ordering of monomials. Recall that a monomial ordering on the variables is called \emph{diagonal} if for any square submatrix of $X$, the leading term of the permanent of this submatrix is exactly the product of the entries on its diagonal. Such an ordering is known to exist for symmetric matrices, such as one defined by
\[
x_{ij}>x_{kl} \text{ if $i<k$ or $i=k$ and $j<l$}, 
\]
where $i\leq j$ and $k\leq l$.

\begin{theorem}\label{thm:Grobner-symmetric}
	Let $X$ be an $n\times n$ symmetric matrix with indeterminates as entries. The following collection $G$ of polynomials is a reduced Gr{\"o}bner basis for $P_2(X)$ with respect to any diagonal ordering:
	\begin{enumerate}
		\item[(1a)]  The subpermanents $x_{ii}x_{jj}+x_{ij}^2$,\  $i<j$;
		\item[(1b)]  the subpermanents $x_{ii}x_{jk}+x_{ij}x_{ik}$,\  $j<k$,\  $i\neq j,k$;
		\item[(1c)]  $x_{ij}x_{kl}$,\  $i,j,k,l$ are distinct, $i<j$, $k<l$;
		\item[(2a)]  $x_{il}x_{jl}x_{kl}$,\  $i<j<l$,\  either $k=i$, $k=j$, or $j< k< l$;
		\item[(2b)]  $x_{il}x_{jl}x_{kk}$,\  $i<j<k<l$;
		\item[(2c)]  $x_{ij}x_{ik}x_{jj}$,\  $i<j<k$;
		\item[(3a)]  $x_{ij}x_{ik}x_{il}$,\  $i<k<l$,\  either $j=l$, $j=k$, or $i< j< k$;
		\item[(3b)]  $x_{ik}x_{il}x_{jj}$,\  $i<j<k<l$;
		\item[(3c)]  $x_{ik}x_{jk}x_{jj}$,\  $i<j<k$;
		\item[(6a)]  $x_{ik}^3x_{jj}$,\  $i<j<k$;
		\item[(6b)]  $x_{ik}^2x_{jj}^2$,\ $i<j<k$.
	\end{enumerate}
\end{theorem}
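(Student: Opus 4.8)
The plan is to first establish that $G$ generates $P_2(X)$ and then to verify the Gr\"obner basis property via Buchberger's criterion, after which reducedness is a matter of bookkeeping. That $G\subseteq P_2(X)$ is immediate from Section~\ref{sec:2}: the binomials in (1a) and (1b) are themselves $2\times 2$ subpermanents of $X$; the quadratic monomials in (1c) lie in $P_2(X)$ by Lemma~\ref{lem:quadratic-generator}; and each cubic or quartic monomial in (2a)--(6b) is, after a suitable relabelling of indices, a product of entries of $X$ of the kind treated in Lemmas~\ref{lem:three-element-in-permanent}, \ref{lem:three-element-in-permanent-2}, and~\ref{lem:three-element-symmetric}. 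For the reverse containment, an arbitrary $2\times 2$ subpermanent of $X$ has the form $x_{ac}x_{bd}+x_{ad}x_{bc}$ with $a<b$ and $c<d$; using the symmetry $x_{pq}=x_{qp}$ and distinguishing which of $a,b$ coincides with which of $c,d$, one checks that it lies in the span of (1a) when $\{a,b\}=\{c,d\}$, of (1b) when exactly one coincidence occurs, and of (1c) when $a,b,c,d$ are distinct. Hence $\langle G\rangle=P_2(X)$, and it remains to show that $G$ is a reduced Gr\"obner basis.

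By Buchberger's criterion it suffices to show that $S(g,g')$ reduces to $0$ modulo $G$ for all pairs $g\neq g'$ in $G$, and two standard reductions shrink this to a finite check. First, if $g$ and $g'$ are both monomials, then $S(g,g')=0$ identically, so one of the two must be a binomial from (1a) or (1b). Second, if the leading monomials of $g$ and $g'$ are coprime, then $S(g,g')$ reduces to $0$ automatically. What remains is a pair consisting of a binomial together with a second element of $G$ whose leading monomial shares a variable with it; as such a pair involves only a bounded number of indices and the relevant comparisons depend only on their relative order, this amounts to finitely many cases. When the second element is a monomial, the leading terms cancel and $S(g,g')$ is the single monomial $\operatorname{lcm}(\initial(g),\initial(g'))/\initial(g)$ times the trailing monomial of the binomial $g$; the verification then reduces to exhibiting a generator whose leading monomial divides it, and this is where the cubic and quartic monomials (2a)--(6b) are used. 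When the second element is another binomial, $S(g,g')$ is a binomial and one carries out the reduction by $G$ explicitly.

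The main obstacle is the bookkeeping of leading monomials in family (1b). Although (1b) is written as $x_{ii}x_{jk}+x_{ij}x_{ik}$ with $j<k$ and $i\neq j,k$, its leading monomial under a diagonal order is the product of the diagonal entries of the $2\times 2$ submatrix of $X$ on rows $\{i,j\}$ and columns $\{i,k\}$: this equals $x_{ii}x_{jk}$ when $i<j$ or $i>k$, but equals $x_{ij}x_{ik}$ when $j<i<k$. Consequently the degree-two leading monomials of $G$ comprise, besides the diagonal products $x_{ii}x_{jj}$ and $x_{ii}x_{jk}$ and the squarefree products $x_{ij}x_{kl}$ on four distinct indices, also products $x_{ij}x_{ik}$ of two off-diagonal entries sharing an index, arising exactly from those instances of (1b) whose fixed index lies strictly between the other two. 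Identifying which instance of (1b) is responsible for a given leading monomial is precisely what makes the otherwise mechanical reductions close up; for example, to cancel a ``triangle'' monomial $x_{pq}x_{pr}x_{qr}$ one must divide by the instance of (1b) attached to the middle of the three indices $p,q,r$ rather than by any of the diagonal generators, and overlooking this makes an $S$-polynomial appear not to reduce when in fact it does.

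Finally, $G$ is reduced. Every element of $G$ is monic in its leading term, so it remains to check that no monomial in the support of any $g\in G$ is divisible by $\initial(g')$ for a distinct $g'\in G$. For the monomial generators this is the statement that the leading monomials of $G$ form an antichain under divisibility, which one verifies by comparing degrees and the arrangements of diagonal, off-diagonal, and repeated indices. For the binomials one additionally checks that the trailing monomials are standard: the trailing monomial $x_{ij}^2$ of (1a) is visibly not divisible by any $\initial(g')$, and for (1b) the analysis of the previous paragraph shows that in the delicate case $j<i<k$ the trailing monomial $x_{ii}x_{jk}$ is not a leading monomial of any generator of $G$ --- its diagonal index $i$ lies strictly between $j$ and $k$, a pattern realised by none of the leading monomials of $G$. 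I expect the $S$-polynomial analysis, together with the attendant bookkeeping for family (1b), to absorb essentially all of the effort, with the generation and reducedness claims being routine by comparison.
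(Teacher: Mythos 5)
Your proposal is correct and follows essentially the same route as the paper: show $G\subseteq P_2(X)$ via the monomial lemmas of Section~\ref{sec:2}, show $P_2(X)\subseteq\langle G\rangle$ by expressing each $2\times 2$ subpermanent in terms of (1a)--(1c), and then apply Buchberger's criterion, discarding monomial--monomial pairs and coprime-leading-term pairs and reducing the rest to a finite verification. Your observation that the leading monomial of the type (1b) binomial $x_{ii}x_{jk}+x_{ij}x_{ik}$ is $x_{ij}x_{ik}$ precisely when $j<i<k$ (so that monomials like $x_{pq}x_{qr}$ with $p<q<r$ are leading terms) is accurate and is indeed the key bookkeeping point; the paper's proof elides it by only writing out the (1a)--(1a) $S$-polynomials and deferring the rest to a $6\times 6$ check.
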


This Gr{\"o}bner basis is similar to that of $P_2(Y)$, where $Y$ is a generic matrix, found in \cite[Theorem 3.2]{LS00}. We use corresponding notations to emphasize this similarity. For example, polynomials of types $(1a),(1b),(1c)$ are minimal generators of $P_2(X)$, and they correspond to polynomials of types $(1)$ in \cite[Theorem 3.2]{LS00} which are minimal generators of $P_2(Y)$. Thus, to simplify the notations, we will call polynomials of types $(1a), (1b), (1c)$ to be of type $(1)$, and we apply similar definitions to other types. We also remark that we use $(6a)$ instead of $(4a)$ since these monomials are similar to those of type $(6)$ in \cite[Theorem 3.2]{LS00}.

Pictorially, the monomials of types $(2)$, $(3)$ and $(6)$ are products of the following entries (each entry may repeat itself) of suitably sized submatrices:
\begin{center}
	\begin{tabular}{cccccc}
		$\begin{pmatrix}
			&& \circ \\
			* &\circ &
		\end{pmatrix}$ &\ \ \ \ \ \ \ & $\begin{pmatrix}
			&\circ & * \\
			\circ & &
		\end{pmatrix}$ &\ \ \ \ \ \ \ \ \ & $\begin{pmatrix}
			& \circ \\
			\bullet &
		\end{pmatrix}$ \\
		type $(2)$              &\ \ \ \ \ \ \ \ \ &  type $(3)$ &\ \ \ \  \ \ \ \ \ & type $(6)$      
	\end{tabular}
\end{center}
Here $\circ$ marks entries that can be anywhere, $*$ entries that are off-diagonal, and $\bullet$ entries that are on the diagonal. Some monomials, at a first glance, may not look like what the pictures show. For example, an element of type $(2a)$ is $x_{il}x_{jl}x_{kl}$, and a product of three entries in the same column; but it equals $x_{li}x_{lj}x_{kl}$, the product of entries in the picture for type-(2) monomials.

There are two reasons why this Gr{\"o}bner basis is, in a sense, more complicated than its generic counterpart: The same permanent may appear in different ways in a symmetric matrix, and the appearance of quadratic monomial minimal generators (Proposition \ref{lem:quadratic-generator}) forces the polynomials in a Gr{\"o}bner basis to be more selective.

Finally, we remark that it is easy to compute the cardinality of this Gr{\"o}bner basis, which~is
\[
\binom{n}{2}+11\binom{n}{3}+7\binom{n}{4}.
\]

\begin{proof}[Proof of Theorem \ref{thm:Grobner-symmetric}]
	It is clear that each polynomial of $G$ is in $P_2(X)$ by Lemmas \ref{lem:three-element-in-permanent}, \ref{lem:three-element-in-permanent-2}, and Proposition \ref{lem:quadratic-generator}. Since all the polynomials of type $(1)$ are minimal generators of $P_2(X)$, $G$ certainly generates the ideal. By definition, it suffices to show that $G$ is indeed a Gr{\"o}bner basis, i.e., that the $S$-polynomial $S(f,g)$ of any two polynomials  $f,g$ in $G$ reduces to zero with respect to $G$. Since the $S$-polynomial of two monomials is always zero, we can assume $f$ is of type $(1a)$ or $(1b)$.
	
	We note that the general case can be deduced once one shows that this is true when $X$ is a $6\times 6$ matrix. These finitely many cases can be either verified manually or by a computer. We will leave the complete verification to interested readers, and will only illustrate the case when $f$ and $g$ are both of type $(1a)$:
	\begin{align*}
		f&=x_{ii}x_{jj}+x_{ij}^2,\  i<j,\\
		g&=x_{kk}x_{ll}+x_{kl}^2,\  k<l.
	\end{align*}
	Without loss of generality, we can assume $i\leq k$. If $i,j,k,l$ are distinct, then $S(f,g)$ clearly reduces to zero. Hence we have three more cases:
	\begin{itemize}
		\item $k=i$: Without loss of generality we assume $j<l$. Then
		\begin{align*}
			S(f,g)&=(x_{ll})(x_{ii}x_{jj}+x_{ij}^2)-(x_{jj})(x_{ii}x_{ll}+x_{il}^2)\\
			&=(x_{ij})(x_{ij}x_{ll}+x_{il}x_{jl})-(x_{il})(x_{ij}x_{jl}+x_{il}x_{jj}).
		\end{align*}
		\item $k=j$: 
		\begin{align*}
			S(f,g)&=(x_{ll})(x_{ii}x_{jj}+x_{ij}^2)-(x_{ii})(x_{jj}x_{ll}+x_{jl}^2)\\
			&=(-x_{jl})(x_{ii}x_{jl}+x_{ij}x_{il})+(x_{ij})(x_{ij}x_{ll}+x_{il}x_{jl}).
		\end{align*}
		\item $l=j$: 
		\begin{align*}
			S(f,g)&=(x_{kk})(x_{ii}x_{jj}+x_{ij}^2)-(x_{ii})(x_{kk}x_{jj}+x_{kj}^2)\\
			&=(-x_{kj})(x_{ii}x_{kj}+x_{ij}x_{ik})+(x_{ij})(x_{ik}x_{kj}+x_{ij}x_{kk}).\qedhere
		\end{align*}
	\end{itemize}
\end{proof}

\section{Minimal primes of $P_2(X)$}\label{sec:4}

In this section we will determine all the minimal primes of $P_2(X)$, and compute the dimension and depth of the coordinate ring $\Bbbk[X]/P_2(X)$ along the way. For each pair of integers $i<j$, let $P_{ij}$ denote the ideal generated by
\[
\perm\begin{pmatrix}
	x_{ii}&x_{ij}\\
	x_{ij}&x_{jj}
\end{pmatrix}=x_{ii}x_{jj}+x_{ij}^2,
\]
and all the entries of $X$ outside of this submatrix.

\begin{theorem}\label{thm:symmetric-minimal-primes}
    Let $X$ be an $n\times n$ symmetric matrix with indeterminates as entries. An ideal $P$ is a minimal prime of $P_2(X)$ if and only if $P=P_{ij}$ for some $i<j$. In particular,
	\[
	\dim (\Bbbk[X]/ P_2(X)) = 2.
	\]
	Moreover, we have
	\[
	\depth (\Bbbk[X]/ P_2(X)) = \begin{cases}
		2 & \text{if } n=2,\\
		0 & \text{if } n>2
	\end{cases}.
	\]
\end{theorem}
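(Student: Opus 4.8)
The plan is to prove the three assertions in turn, using the Gröbner basis $G$ from Theorem~\ref{thm:Grobner-symmetric} as the main tool for the radical/dimension part and a direct associated-primes computation for the depth.

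First I would identify the minimal primes. Each $P_{ij}$ is prime: $\Bbbk[X]/P_{ij}$ is a polynomial ring over $\Bbbk[x_{ii},x_{ij},x_{jj}]/(x_{ii}x_{jj}+x_{ij}^2)$, and the quadric $x_{ii}x_{jj}+x_{ij}^2$ is irreducible when $\characteristic\Bbbk\neq 2$ (it factors only over a field where $-1$ is a square and even then as a product of two distinct linear forms — wait, it does factor as $(x_{ij}-\sqrt{-1}\,x_{ii}\cdots)$; more carefully, $x_{ii}x_{jj}+x_{ij}^2$ is irreducible over any field since it is a nondegenerate quadratic form in three variables, hence the residue ring is a domain). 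To see $P_2(X)\subseteq P_{ij}$: every type-(1a) permanent other than the $(i,j)$ one involves an index outside $\{i,j\}$, hence a variable that is killed in $P_{ij}$; every type-(1b) permanent $x_{kk}x_{lm}+x_{kl}x_{km}$ likewise involves three indices, at least one outside $\{i,j\}$; and type-(1c) monomials use four indices. So $P_{ij}$ is a prime containing $P_2(X)$ of dimension $2$. Conversely, suppose $\mathfrak p$ is a minimal prime of $P_2(X)$. The key observation is that for any three distinct indices $a,b,c$, Lemma~\ref{lem:quadratic-generator} (applied with the four indices $a,b,c$ and a fourth one, when $n\geq 4$) together with Lemma~\ref{lem:three-element-symmetric} and Lemma~\ref{lem:three-element-in-permanent} force many variables into $\mathfrak p$; I would show that the set $S=\{(a,b): x_{ab}\notin\mathfrak p\}$ cannot contain two ``independent'' entries, concluding $S$ is supported on a single $2\times 2$ symmetric block $\{i,j\}$, and then $\mathfrak p\supseteq$ (all variables off that block), so $\mathfrak p\supseteq P_{ij}$ modulo checking the quadric lies in $\mathfrak p$ — but $x_{ii}x_{jj}+x_{ij}^2\in P_2(X)\subseteq\mathfrak p$, done. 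The dimension statement is then immediate since $\dim\Bbbk[X]/P_2(X)=\max_{i<j}\dim\Bbbk[X]/P_{ij}=2$.

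For the depth, the case $n=2$ is trivial: $P_2(X)=(x_{11}x_{22}+x_{12}^2)$ is a principal ideal generated by a nonzerodivisor, a hypersurface, so $\Bbbk[X]/P_2(X)$ is Cohen--Macaulay of depth equal to its dimension $2$. For $n>2$ I would show $\depth\Bbbk[X]/P_2(X)=0$ by exhibiting a nonzero socle element, i.e., a monomial $m\notin P_2(X)$ with $x_{ab}\cdot m\in P_2(X)$ for every variable $x_{ab}$. Using the Gröbner basis, membership in $P_2(X)$ is detected by divisibility by a leading monomial of $G$; a natural candidate is (a monomial closely related to) $m=x_{12}x_{13}x_{23}$ or $m = x_{11}x_{22}$ — I would look for the largest monomial not in $\initial(P_2(X))$ all of whose variable-multiples lie in $\initial(P_2(X))$. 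Concretely, take three indices, say $1,2,3$; the monomial $x_{12}x_{13}$: multiplying by $x_{23}$ gives $x_{12}x_{13}x_{23}$, which is a type-$(2a)$ or $(3a)$ leading monomial after reindexing; multiplying by $x_{11}$ gives a type-$(2c)$/$(3b)$ monomial; multiplying by $x_{1k}$ for $k\geq 4$ gives $x_{1k}x_{12}x_{13}$, of type $(3a)$; multiplying by $x_{kl}$ with $k,l\notin\{1\}$ produces a factor $x_{kl}$ with two indices disjoint from an off-diagonal entry, landing in a type-$(1c)$ or type-$(2)$/$(3)$ leading monomial. I would check case-by-case that $x_{ab}\cdot(x_{12}x_{13})$ is divisible by some leading term of $G$ for every $(a,b)$, while $x_{12}x_{13}$ itself is not (it has no repeated variable and only two indices, so it dodges all leading terms of types $(2),(3),(6)$, and it is not divisible by any type-(1) leading monomial since those are $x_{ii}x_{jj}$, $x_{ii}x_{jk}$, or $x_{ij}x_{kl}$ with four indices). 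This produces a nonzero socle element, forcing $\depth=0$.

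The main obstacle I anticipate is the converse direction in the minimal-primes argument: cleanly proving that if $\mathfrak p$ is a minimal prime then the ``surviving'' variables lie in a single symmetric $2\times 2$ block. The combinatorics of which products Lemmas~\ref{lem:three-element-in-permanent}, \ref{lem:three-element-in-permanent-2}, \ref{lem:three-element-symmetric}, and \ref{lem:quadratic-generator} put into the ideal is a little delicate — one must handle on-diagonal versus off-diagonal surviving entries separately and rule out configurations like $\{x_{ij},x_{ik}\}$ surviving (excluded by type-(1b) giving $x_{ii}x_{jk}+x_{ij}x_{ik}$ and $x_{ij}^2x_{ik}\in P_2(X)$) or $\{x_{ii},x_{jk}\}$ surviving (excluded similarly). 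A careful case analysis on $|\{\text{indices appearing in surviving entries}\}|$ being $2$, $3$, or $\geq 4$ should close it, with the $\geq 4$ and $=3$ cases eliminated outright and the $=2$ case pinned down to exactly $P_{ij}$; I would organize this as a short sequence of claims. The depth computation, by contrast, is mechanical once the right socle monomial is guessed, and the $n=2$ case needs no work.
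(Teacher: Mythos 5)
Your overall strategy matches the paper's: use the Gr\"obner basis of Theorem~\ref{thm:Grobner-symmetric} to control ideal membership, characterize the minimal primes by forcing enough variables into any prime containing $P_2(X)$, and exhibit a nonzero socle element to get depth zero when $n>2$. The minimal-primes sketch is plausible but notably less efficient than the paper's: the paper simply observes that $x_{ii}^2x_{jj}x_{kk}\in P_2(X)$ (Lemma~\ref{lem:three-element-in-permanent-2}) forces $x_{kk}\in\mathfrak p$ for all but two indices $i,j$, and then each off-block variable $x_{lk}$ dies via $x_{lk}^2+x_{ll}x_{kk}\in\mathfrak p$; your ``case analysis on how many indices survive'' would get there but is the long way around, and you never actually invoke the one lemma that closes it in a line.

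There is a genuine error in the depth argument. Your final socle candidate $m=x_{12}x_{13}$ does not work: the product $x_{11}\cdot m = x_{11}x_{12}x_{13}$ lies outside $P_2(X)$. Indeed, run it against the leading monomials of $G$: types (1a), (1c), (6) need two distinct diagonals or four distinct indices; (1b) requires $x_{ii}x_{jk}$ with $i\neq j,k$, which excludes $x_{11}x_{12}$ and $x_{11}x_{13}$; and every monomial of types (2a)--(3c) has its diagonal factor (when present) $x_{jj}$ with $j$ strictly greater than another index, which rules out a factor $x_{11}$. So $x_{11}x_{12}x_{13}$ is a standard monomial, hence not in $P_2(X)$, and $m\mathfrak m\not\subseteq P_2(X)$. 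The monomial you floated first and then discarded, $x_{12}x_{13}x_{23}$, is the one that actually works, and is what the paper uses. There is also a subtler logical gap in your verification plan: checking that $x_{ab}m$ is divisible by a leading monomial of $G$ only proves $x_{ab}m\in\initial(P_2(X))$, not $x_{ab}m\in P_2(X)$; this suffices when the relevant Gr\"obner basis element is itself a monomial (types (1c), (2), (3), (6)) but not for types (1a), (1b). The paper sidesteps this by proving $f\mathfrak m\subseteq P_2(X)$ directly from Lemma~\ref{lem:three-element-in-permanent}, reserving the Gr\"obner basis only for the ``$f\notin P_2(X)$'' half.
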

\begin{proof}
    It is clear that $P_{ij}$ is prime for each pair of integers $i<j$. To show the converse, let $P$ be a minimal prime of $P_2(X)$. By Lemma \ref{lem:three-element-in-permanent-2}, for any distinct $i,j,k$, we have $x_{ii}^2x_{jj}x_{kk}\in P_2(X)\subseteq P$. Thus, there exist two different indices $i,j$ such that $x_{kk}\in P$ for any $k\neq i,j$. Moreover, for any $k\neq i,j$ and any $l\neq k$, we have
	\[
	x_{lk}^2+x_{ll}x_{kk}=\perm \begin{pmatrix}
		x_{ll}&x_{lk}\\
		x_{lk}&x_{kk}
	\end{pmatrix}\in P_2(X)\subseteq P.
	\]
	Thus $x_{lk}\in P$. Finally,  since
	\[
	\perm \begin{pmatrix}
		x_{ii}&x_{ij}\\
		x_{ij}&x_{jj}
	\end{pmatrix}\in P_2(X)\subseteq P,
	\]
	the ideal $P$ contains $P_{ij}$. Therefore, $\{P_{ij}\}_{i<j}$ is the set of minimal primes of $P_2(X)$.
	
	Next we compute $\depth(\Bbbk[X]/P_2(X))$. When $n=2$, the ring $\Bbbk[X]/P_2(X)$ is a hypersurface and hence its depth equals its Krull dimension, which is $2$. Now assume that $n>2$. Consider the monomial $f\coloneqq x_{12}x_{13}x_{23}$. We observe that $f=(x_{13})(x_{12}x_{23}+~x_{13}x_{22})-~(x_{13}^2x_{22})$, and $x_{13}^2x_{22}$ is not reducible by the Gr{\"o}bner basis $G$ in Theorem \ref{thm:Grobner-symmetric}. Thus $f\notin P_2(X)$. On the other hand, we note that the matrix $X$ is symmetric, and thus each variable in $f$ can be considered to be in one of two positions in $X$. Using Lemma~\ref{lem:three-element-in-permanent}, one can verify  that $f\mathfrak{m}\subseteq P_2(X)$, where $\mathfrak{m}$ is the homogeneous maximal ideal of $\Bbbk[X]$. Hence $\Hom_{\Bbbk[X]}(\Bbbk, \Bbbk[X]/P_2(X))\neq 0$, and thus $\depth(\Bbbk[X]/P_2(X))=0$ by \cite[Theorem 9.1]{24h}. 
\end{proof}

We present an application of this theorem.

\begin{corollary}\label{cor:symmetric-notCM-notreduced}
    Let $X$ be an $n\times n$ symmetric matrix with indeterminates as entries. If $n=2$, the ring $\Bbbk[X]/P_2(X)$ is a complete intersection domain. If $n>2$, the ring $\Bbbk[X]/P_2(X)$ is equidimensional, but not Cohen-Macaulay.
\end{corollary}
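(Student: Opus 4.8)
The plan is to handle the cases $n=2$ and $n>2$ separately, in both cases extracting everything from Theorem~\ref{thm:symmetric-minimal-primes} together with one elementary observation.

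For $n=2$ the ideal $P_2(X)$ is principal: it is generated by the single polynomial $g=x_{11}x_{22}+x_{12}^2$ inside the three-variable polynomial ring $\Bbbk[x_{11},x_{12},x_{22}]$. I would first check that $g$ is irreducible. Since $\characteristic\Bbbk\neq 2$, the substitution $x_{11}\mapsto a+b$, $x_{22}\mapsto a-b$ turns $g$ into $a^2-b^2+x_{12}^2$, a nondegenerate quadratic form in three variables, which is irreducible. Consequently $\Bbbk[X]/P_2(X)$ is the quotient of a polynomial ring by a single nonzero nonunit, hence a hypersurface and in particular a complete intersection, and it is an integral domain because $g$ is irreducible (equivalently, prime) in the UFD $\Bbbk[x_{11},x_{12},x_{22}]$.

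For $n>2$, equidimensionality is read off directly from the list of minimal primes. By Theorem~\ref{thm:symmetric-minimal-primes} every minimal prime of $P_2(X)$ equals $P_{ij}$ for some $i<j$, and $P_{ij}$ is obtained by killing all entries of $X$ outside the symmetric submatrix on rows and columns $i,j$ and imposing the single relation $x_{ii}x_{jj}+x_{ij}^2$; thus $\Bbbk[X]/P_{ij}\cong\Bbbk[x_{ii},x_{ij},x_{jj}]/(x_{ii}x_{jj}+x_{ij}^2)$ has Krull dimension $2$ for every such pair. Since $\dim\Bbbk[X]/P_2(X)=2$ as well by the same theorem, all minimal primes have the same coheight and the ring is equidimensional. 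To see that it fails to be Cohen--Macaulay I would invoke the depth computation in Theorem~\ref{thm:symmetric-minimal-primes}: for $n>2$ one has $\depth(\Bbbk[X]/P_2(X))=0$ whereas $\dim(\Bbbk[X]/P_2(X))=2$, so depth and dimension disagree and the ring cannot be Cohen--Macaulay. Alternatively, depth $0$ in a ring of positive dimension forces the homogeneous maximal ideal to be an embedded associated prime, which also rules out Cohen--Macaulayness.

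There is essentially no obstacle here: the statement is a formal consequence of Theorem~\ref{thm:symmetric-minimal-primes}, and the only step requiring a sentence of its own is the irreducibility of $x_{11}x_{22}+x_{12}^2$ when $\characteristic\Bbbk\neq 2$, which is precisely where the characteristic hypothesis visibly intervenes.
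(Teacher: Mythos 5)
Your proposal is correct and follows the approach the paper clearly intends: the corollary is stated without an explicit proof, as an immediate consequence of Theorem~\ref{thm:symmetric-minimal-primes}, and your argument (hypersurface plus irreducibility of $x_{11}x_{22}+x_{12}^2$ for $n=2$; equal coheight of all minimal primes plus the depth-versus-dimension mismatch for $n>2$) supplies precisely the details left implicit. The only substantive step you spell out beyond the theorem is the irreducibility of the quadratic form, and your rank argument (a product of two linear forms has rank at most $2$, while $x_{11}x_{22}+x_{12}^2$ has rank $3$ when $\characteristic\Bbbk\neq2$) is sound.
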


As another application, we will compute a set of generators for $\sqrt{P_2(X)}$. We recall Niermann's lemma:

\begin{lemma}[{\cite[p. 103]{Niermann}}]\label{lem:Niermann}
	Let $R$ be an arbitrary ring and $I_1,\dots, I_l,\ J_1,\dots, J_l$ be ideals of $R$ such that $I_i\subseteq J_j$ for any $i\neq j$. Then
	\[
	\bigcap_{i=1}^l (I_i+J_i)=(I_1+\cdots +I_l) +\bigcap_{i=1}^l J_i.
	\]
\end{lemma}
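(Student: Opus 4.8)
The plan is to prove the two inclusions separately; the inclusion $\supseteq$ is a purely formal consequence of the containment hypotheses, while the inclusion $\subseteq$ needs one small trick.

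For the inclusion $\supseteq$, I would fix an index $k\in\{1,\dots,l\}$ and check that each of $I_1,\dots,I_l$ as well as $\bigcap_{i=1}^l J_i$ is contained in $I_k+J_k$. Indeed $I_k\subseteq I_k+J_k$ trivially; for $i\neq k$ the hypothesis $I_i\subseteq J_k$ gives $I_i\subseteq I_k+J_k$; and $\bigcap_{i=1}^l J_i\subseteq J_k\subseteq I_k+J_k$. Hence $(I_1+\cdots+I_l)+\bigcap_{i=1}^l J_i\subseteq I_k+J_k$, and since $k$ was arbitrary this sum lies in $\bigcap_{k=1}^l (I_k+J_k)$.

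For the inclusion $\subseteq$, take $x\in\bigcap_{i=1}^l(I_i+J_i)$ and, for each $i$, choose a decomposition $x=a_i+b_i$ with $a_i\in I_i$ and $b_i\in J_i$. The key observation is that $y\coloneqq x-(a_1+\cdots+a_l)$ lies in every $J_k$: writing $y=b_k-\sum_{i\neq k}a_i$ and using $b_k\in J_k$ together with $a_i\in I_i\subseteq J_k$ for each $i\neq k$, we get $y\in J_k$. Therefore $y\in\bigcap_{k=1}^l J_k$, and the identity $x=(a_1+\cdots+a_l)+y$ exhibits $x$ as an element of $(I_1+\cdots+I_l)+\bigcap_{k=1}^l J_k$, finishing the proof.

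The main (indeed the only real) obstacle is spotting the correct element to subtract from $x$; once one notices that removing $a_1+\cdots+a_l$ simultaneously corrects $x$ modulo each $J_k$, the verification is immediate, and no finiteness or commutativity assumptions on $R$ are used.
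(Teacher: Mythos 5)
Your proof is correct, and both inclusions are argued cleanly; the key step of subtracting $a_1+\cdots+a_l$ from $x$ is exactly the right move. Note that the paper itself does not supply a proof of this lemma---it cites Niermann directly---so there is no in-paper argument to compare against, but your argument is the standard one and establishes the claim in full generality.
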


We can now compute $\sqrt{P_2(X)}$.

\begin{proposition}\label{prop:symmetric-generators-radical}
	Let $X$ be an $n\times n$ symmetric matrix with indeterminates as entries. We~have 
	\begin{multline*}
	    \sqrt{P_2(X)}=P_2(X)+\big(x_{ij}x_{kl}\mid i\neq j, k\neq l, \text{ and } (i,j)\neq (k,l)\big)+\big(x_{ij}x_{kk}\mid i,j,k \text{ are distinct}\big).
	\end{multline*}
	In particular, $P_2(X)$ is radical if and only if $n=2$.
\end{proposition}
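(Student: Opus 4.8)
The plan is to show the two inclusions, with the right-hand side $R := P_2(X) + (x_{ij}x_{kl} \mid \ldots) + (x_{ij}x_{kk} \mid \ldots)$ playing the role of an explicit candidate for the radical. For the inclusion $R \subseteq \sqrt{P_2(X)}$, I would use Theorem~\ref{thm:symmetric-minimal-primes}: the radical of $P_2(X)$ is $\bigcap_{i<j} P_{ij}$, so it suffices to check that every listed generator lies in each $P_{ij}$. A monomial $x_{ab}x_{cd}$ (with $a\neq b$, $c\neq d$, $(a,b)\neq(c,d)$, up to symmetry of indices) involves an off-diagonal variable $x_{ab}$ that is not $x_{ij}$, unless $\{a,b\}=\{i,j\}$; but then $(c,d)\neq(a,b)$ forces $x_{cd}$ to be one of the entries of $X$ outside the displayed $2\times 2$ submatrix, hence in $P_{ij}$. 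The products $x_{ij}x_{kk}$ with $i,j,k$ distinct are handled the same way: at least one of $x_{ij}$, $x_{kk}$ lies outside the $\{i,j\}$-submatrix. So $R \subseteq \bigcap_{i<j}P_{ij} = \sqrt{P_2(X)}$.

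The substantive direction is $\sqrt{P_2(X)} \subseteq R$, for which it suffices to prove that $R$ is already radical, i.e. $R = \bigcap_{i<j} P_{ij}$ (the reverse inclusion $R \subseteq \bigcap P_{ij}$ being what we just proved; note $\bigcap P_{ij} = \sqrt{P_2(X)} \subseteq \sqrt{R}$, and if $R$ is radical we are done). Here is where I would bring in Niermann's lemma (Lemma~\ref{lem:Niermann}). Write each $P_{ij}$ as $I_{ij} + J_{ij}$ where $J_{ij}$ is the prime ideal generated by all entries of $X$ outside the $\{i,j\}$-submatrix together with the single quadric $x_{ii}x_{jj}+x_{ij}^2$ — actually better: take $J_{ij} := (x_{ab} : \{a,b\}\not\subseteq\{i,j\})$, the ideal of all variables other than $x_{ii},x_{ij},x_{jj}$, and set $I_{ij} := (x_{ii}x_{jj}+x_{ij}^2)$. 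Then $P_{ij} = I_{ij}+J_{ij}$, and the hypothesis of Niermann's lemma, $I_{ij}\subseteq J_{kl}$ for $(i,j)\neq(k,l)$, holds because if $\{i,j\}\neq\{k,l\}$ then at least one of $x_{ii},x_{ij},x_{jj}$ lies among the variables generating $J_{kl}$, so $x_{ii}x_{jj}+x_{ij}^2\in J_{kl}$. Niermann's lemma then gives
\[
\bigcap_{i<j} P_{ij} \;=\; \Big(\textstyle\sum_{i<j} I_{ij}\Big) + \bigcap_{i<j} J_{ij}.
\]
The first summand is $(x_{ii}x_{jj}+x_{ij}^2 : i<j)$, which is contained in $P_2(X)\subseteq R$. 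The intersection $\bigcap_{i<j} J_{ij}$ is an intersection of monomial (indeed linear) primes, hence the monomial ideal generated by all squarefree monomials meeting every complement $\{x_{ab}:\{a,b\}\not\subseteq\{i,j\}\}$ — concretely, a monomial $\prod x_{a_tb_t}$ lies in it iff for every pair $\{i,j\}$ some factor is a variable not among $x_{ii},x_{ij},x_{jj}$, equivalently the variables appearing do not all fit inside a single $2\times 2$ symmetric block. I would then identify this monomial ideal's minimal generators explicitly and check each one lies in $R$: the minimal such monomials are exactly $x_{ij}x_{kl}$ with the stated index conditions, $x_{ij}x_{kk}$ with $i,j,k$ distinct, and the various degree-$\leq 3$ diagonal-heavy monomials like $x_{ii}x_{jj}x_{kk}$, $x_{ij}^2x_{kk}$, etc. — but all of the latter already belong to $P_2(X)$ by Lemmas~\ref{lem:three-element-in-permanent}, \ref{lem:three-element-in-permanent-2} and~\ref{lem:three-element-symmetric}, and the first two families are by definition in $R$. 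Hence $\bigcap_{i<j}J_{ij}\subseteq R$, giving $\bigcap P_{ij}\subseteq R$ and therefore $R=\sqrt{P_2(X)}$.

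The main obstacle I anticipate is the bookkeeping in the last step: pinning down the complete list of minimal monomial generators of $\bigcap_{i<j} J_{ij}$ and verifying that every one of them that is not already one of the two displayed families lies in $P_2(X)$. This is a finite combinatorial check — the relevant monomials have degree at most $3$ in at most $4$ indices, so (as in the proof of Theorem~\ref{thm:Grobner-symmetric}) it reduces to matrices of bounded size — but it requires care to be exhaustive, in particular to confirm that no genuinely new generator appears beyond what the statement lists. For the final clause, $P_2(X)$ is radical iff $P_2(X)=R$; when $n=2$ the two extra families of generators are empty (there are no four distinct indices, nor three distinct ones), so $R=P_2(X)$ and the ring is the complete-intersection domain of Corollary~\ref{cor:symmetric-notCM-notreduced}, hence reduced; when $n>2$ the monomial $x_{12}x_{34}$ (if $n\geq 4$) or $x_{12}x_{33}$ (if $n=3$) lies in $R\setminus P_2(X)$ — non-membership in $P_2(X)$ being visible from the Gröbner basis $G$ of Theorem~\ref{thm:Grobner-symmetric}, since these monomials are not divisible by any leading term of $G$ — so $P_2(X)$ is not radical.
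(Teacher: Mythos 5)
Your overall strategy matches the paper's: reduce $\sqrt{P_2(X)}=\bigcap_{i<j}P_{ij}$ via Niermann's lemma to $P_2(X)$ plus an intersection of variable ideals, then identify that squarefree monomial ideal and absorb each of its generators into the right-hand side $R$. Your choice of decomposition $P_{ij}=I_{ij}+J_{ij}$ with $I_{ij}=(x_{ii}x_{jj}+x_{ij}^2)$ and $J_{ij}$ the ideal of variables outside the block is in fact a little cleaner than what is written in the paper (where the $J$-slot is filled with other subpermanents), and it makes the hypothesis $I_{ij}\subseteq J_{kl}$ transparent. Up to that point the argument is sound.

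The gap is in the final bookkeeping step, and it is not merely a matter of being non-exhaustive: you claim the ``diagonal-heavy'' minimal generators of $\bigcap_{i<j}J_{ij}$ --- in particular $x_{ii}x_{jj}x_{kk}$ --- ``already belong to $P_2(X)$ by Lemmas~\ref{lem:three-element-in-permanent}, \ref{lem:three-element-in-permanent-2} and~\ref{lem:three-element-symmetric}.'' This is false. None of those lemmas produces $x_{ii}x_{jj}x_{kk}$ (it has three distinct rows \emph{and} three distinct columns and no repeated factor), and in fact $x_{ii}x_{jj}x_{kk}\notin P_2(X)$: writing $x_{ii}x_{jj}x_{kk}=x_{ii}(x_{jj}x_{kk}+x_{jk}^2)-x_{ii}x_{jk}^2$ with the first term in $P_2(X)$, membership of $x_{ii}x_{jj}x_{kk}$ in $P_2(X)$ would force $x_{ii}x_{jk}^2\in P_2(X)$, which Proposition~\ref{prop:associated-primes-P2}(1) (or a glance at the Gr\"obner basis of Theorem~\ref{thm:Grobner-symmetric}) rules out. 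The correct statement, which is what the paper proves, is that $x_{ii}x_{jj}x_{kk}$ lies in $P_2(X)+(x_{ij}x_{kk}\mid i,j,k\text{ distinct})$ via exactly the identity above --- so the second family of generators of $R$ is genuinely needed here, not just $P_2(X)$. A secondary, smaller issue: since $\bigcap J_{ij}$ is a squarefree monomial ideal, $x_{ij}^2x_{kk}$ is not among its minimal generators; and in the last paragraph $x_{12}x_{34}$ is a poor witness for non-radicality because it \emph{is} in $P_2(X)$ by Lemma~\ref{lem:quadratic-generator} (take instead $x_{12}x_{13}$ or $x_{12}x_{33}$, the latter of which does reduce nontrivially against the type-(1b) leading term $x_{33}x_{12}$ before terminating at a nonzero remainder).
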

\begin{proof}
	To simplify the notations, for any pair of integers $u<v$, let $I_{uv}$ denote the ideal generated by all the variables outside the submatrix $\begin{pmatrix}
		x_{uu}&x_{uv}\\x_{uv}&x_{vv}
	\end{pmatrix}$, and $J_{uv}$ the ideal generated by all $2\times 2$ subpermanents of $X$, excluding the permanent of this submatrix. Then $P_{uv}=J_{uv} + I_{uv}$. Since $J_{uv}\subseteq I_{ij}$ as long as $\{u,v\}\neq \{i,j\}$, we can use Niermann's lemma as follows:
	\begin{align*}
		\sqrt{P_2(X)}&=\bigcap_{1\leq u<v\leq n} P_{uv}\\
		&=\bigcap_{1\leq u<v\leq n}\left( J_{uv} + I_{uv}\right)\\
		&=\left( \sum_{1\leq u<v\leq n} J_{uv} \right)+ \bigcap_{1\leq u<v\leq n} I_{uv}\\
            &=P_2(X)+ \bigcap_{1\leq u<v\leq n} I_{uv}.
	\end{align*}
	We claim that 
	\begin{equation}\label{Niermann-thing}
		\bigcap_{1\leq u<v\leq n} I_{uv}=\!\begin{multlined}[t][.3\displaywidth]
			\big(x_{ij}x_{kl}\mid \text{two distinct off-diagonal entries of } X\big)\\+\big(x_{ij}x_{kk}\mid \text{one on- and one off-diagonal entries,} \\\text{  neither in the same row or column, of } X\big) \\+\big(x_{ii}x_{jj}x_{kk} \mid \text{three distinct on-diagonal entries of } X \big).
		\end{multlined}
	\end{equation}
	Since each $I_{uv}$ is a square-free monomial ideal, so is the intersection $\bigcap_{1\leq u<v\leq n} I_{uv}$. On the other hand, the containment $(\supseteq)$ of (\ref{Niermann-thing}) is straightforward. It suffices to show that if a square-free monomial $m$ of $\Bbbk[X]$ does not belong to the right-hand side of (\ref{Niermann-thing}), then $m$ does not belong to $\bigcap_{1\leq u<v\leq n} I_{uv}$, either. Indeed, we have the following cases:
	\begin{itemize}
		\item Suppose that $\deg(m)=1$, i.e., $m$ is just a variable. It is straightforward that there exists an ideal $I_{uv}$ such that $m\notin I_{uv}$.
		\item Suppose that $\deg(m)=2$, i.e., $m$ is the product of two different variables. Since $m$ does not belong to the  right-hand side of (\ref{Niermann-thing}), $m$ must be the product of two distinct on-diagonal entries of $X$, or the product of one on- and one off-diagonal entries of $X$ that are in the same row, or column. In other words, $m=x_{ii}x_{jj}$ or $m=x_{ij}x_{ii}$ for some integers $i\neq j$. Since the initial ideal of $I_{ij}$ with respect to the lex monomial ordering where  $x_{ij}$ is the largest  is $(x_{ij}^2)+(\text{variables other than }x_{ii},x_{ij}, x_{jj})$, which does not contain $m$. Therefore, in both cases, $m$ does not belong to $I_{ij}$
		\item Suppose that $\deg(m)=3$, i.e., $m$ is the product of three distinct variables. Since $m$ does not belong to the  right-hand side of (\ref{Niermann-thing}), we must have $m=x_{ii}x_{jj}x_{ij}$ for some integers $i<j$. Again, since  the initial ideal of $I_{ij}$ with respect to the lex monomial ordering where  $x_{ij}$ is the largest  is $(x_{ij}^2)+(\text{variables other than }x_{ii},x_{ij}, x_{jj})$, which does not contain $m$. Therefore, the ideal $I_{ij}$ itself does not contain $m$ either.
		\item Suppose that $\deg(m)\geq 4$, i.e., $m$ is divisible by at least four distinct variables. We will show that there is no such $m$, i.e., that $m$ always belongs to the right-hand side of (\ref{Niermann-thing}). Indeed, if $m$ is divisible by at least two different off-diagonal entries of $X$, then we are done. Otherwise $m$ is divisible by at most one off-diagonal entry of $X$, and in turn divisible by at least three distinct on-diagonal entries of $X$, which also implies what we claimed.
	\end{itemize}
	To sum up, the claim (\ref{Niermann-thing}) holds. Thus
	\begin{align*}
		\sqrt{P_2(X)}&=P_2(X)+\!\begin{multlined}[t][.3\displaywidth]
			 \big(x_{ij}x_{kl}\mid i\neq j, k\neq l, \text{ and } (i,j)\neq (k,l)\big)+\big(x_{ij}x_{kk}\mid i,j,k \text{ are distinct}\big)\\ +\big(x_{ii}x_{jj}x_{kk}\mid i,j,k \text{ are distinct}\big)
		\end{multlined} \\
	&=P_2(X)+\!\begin{multlined}[t][.3\displaywidth]
		\big(x_{ij}x_{kl}\mid i\neq j, k\neq l, \text{ and } (i,j)\neq (k,l)\big)+\big(x_{ij}x_{kk}\mid i,j,k \text{ are distinct}\big).
	\end{multlined} 
	\end{align*}
	The last equality is because for any distinct integers $u,v,w$, we have
	\[
	x_{uu}x_{vv}x_{ww}=(x_{uu})(x_{vv}x_{ww}+x_{vw}^2) -(x_{uu}x_{vw}^2)\in P_2(X)+\big(x_{ij}x_{kk}\mid i,j,k \text{ are distinct}\big).\qedhere
	\]
\end{proof}

We remark that the fact that $P_2(X)$ is radical exactly when $n=2$ can also be deduced from Theorem \ref{thm:symmetric-minimal-primes} since any $\mathbb{N}$-graded ring of depth $0$ is artinian, hence Cohen-Macaulay, which $P_2(X)$ is not if $n>2$. 

\section{A primary decomposition of $P_2(X)$}\label{sec:5}

We will compute a primary decomposition of $P_2(X)$ when $X$ is a symmetric $n\times n$ matrix. In the case of a generic matrix and a Hankel matrix, the ideal generated by its $2\times 2$ subpermanents only has one embedded prime, which is the maximal ideal. We shall see that in the case of a symmetric matrix, the ideal of its $2\times 2$ subpermanents have many more embedded~components. 

Set $R=\Bbbk[X]$. For each $1\leq k\leq n$, set
\begin{align*}
	Q_k&\coloneqq (x_{kk}x_{ij}+x_{ki}x_{kj}\mid i,j\neq k)+(x_{ij}\mid i,j\neq k)^2 + (x_{ij}\mid i,j\neq k)(x_{ki}\mid i\neq k),\\
	P_k&\coloneqq (x_{ij}\colon (i,j)\neq (k,k)).
\end{align*}

It is clear that for each $k$, we have $P_2(X)\subseteq Q_k$, the ideal $P_k$ is prime, and $\sqrt{Q_k}=P_k$. We will show that $P_k$ is $Q_k$-primary. The following proof is inspired by that of \cite[Proposition~3.2]{GGS07}.

\begin{proposition}\label{prop:Qk-primary}
	For each $1\leq k\leq n$, the ideal $Q_k$ is $P_k$-primary.
\end{proposition}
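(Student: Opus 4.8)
The plan is to produce a Gr\"obner basis of $Q_k$ whose initial ideal is $P_k^2$, and then to deduce that $Q_k$ is primary from the Cohen--Macaulayness of $R/P_k^2$.

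Fix a term order adapted to $k$, for instance the lexicographic order in which the variables $x_{ki}$ with $i\neq k$ are larger than all the remaining variables (the order among $x_{kk}$ and the $x_{ij}$ with $i,j\neq k$ being immaterial). With respect to such an order the leading term of $x_{kk}x_{ij}+x_{ki}x_{kj}$ is $x_{ki}x_{kj}$, while the generators of $(x_{ij}\mid i,j\neq k)^2+(x_{ij}\mid i,j\neq k)(x_{ki}\mid i\neq k)$ are monomials. First I would check that the defining generators of $Q_k$ already form a Gr\"obner basis: the $S$-polynomials among the monomial generators vanish; an $S$-polynomial of a binomial $x_{kk}x_{ij}+x_{ki}x_{kj}$ with a monomial whose leading term is coprime to $x_{ki}x_{kj}$ reduces to zero by Buchberger's criterion; and in each remaining case (two binomials sharing some $x_{ki}$, or a binomial and a monomial $x_{ij}x_{kp}$ sharing the factor $x_{kp}$) the $S$-polynomial equals, after a single cancellation, $x_{kk}$ times an element of $(x_{ij}\mid i,j\neq k)^2+(x_{ij}\mid i,j\neq k)(x_{ki}\mid i\neq k)$, hence reduces to zero via the monomial generators. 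Granting this, $\initial(Q_k)$ is generated by all quadratic monomials in the variables other than $x_{kk}$; that is, $\initial(Q_k)=P_k^2$.

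Next, write $P_k=(u_1,\dots,u_d)$ where $u_1,\dots,u_d$ are the variables of $R$ distinct from $x_{kk}$. The element $x_{kk}$ is a nonzerodivisor on $R/P_k^2$, and $R/(P_k^2+(x_{kk}))\cong\Bbbk[u_1,\dots,u_d]/(u_1,\dots,u_d)^2$ is Artinian, so $R/P_k^2$ is Cohen--Macaulay of dimension $1$. Since $R/\initial(Q_k)$ is a Gr\"obner degeneration of $R/Q_k$, we get $\depth(R/Q_k)\ge\depth(R/\initial(Q_k))=1$ and $\dim(R/Q_k)=\dim(R/\initial(Q_k))=1$, so $R/Q_k$ is Cohen--Macaulay. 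A Cohen--Macaulay ring has no embedded associated primes, hence $\operatorname{Ass}(R/Q_k)=\operatorname{Min}(R/Q_k)$; since $\sqrt{Q_k}=P_k$ is prime, this set equals $\{P_k\}$, which is exactly the assertion that $Q_k$ is $P_k$-primary.

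The only step demanding genuine computation is the Gr\"obner basis verification, and in particular the $S$-polynomials built from two of the binomials $x_{kk}x_{ij}+x_{ki}x_{kj}$; everything after $\initial(Q_k)=P_k^2$ is formal. As an alternative to the Cohen--Macaulay shortcut, closer to \cite{GGS07}, one can localize at $x_{kk}$: there the binomial relations let one eliminate each $x_{ij}$ ($i,j\neq k$) in favour of $-x_{ki}x_{kj}x_{kk}^{-1}$, so that $(R/Q_k)_{x_{kk}}\cong\Bbbk[x_{kk}^{\pm1},\,x_{ki}\ (i\neq k)]/(x_{ki}\mid i\neq k)^3$, which is $(x_{ki}\mid i\neq k)$-primary because that ideal is generated by a regular sequence; together with $x_{kk}$ being a nonzerodivisor modulo $Q_k$ (again read off from $\initial(Q_k)=P_k^2$), this also gives $\operatorname{Ass}(R/Q_k)=\{P_k\}$.
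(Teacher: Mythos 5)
Your proof is correct, and the main argument is genuinely different from the one in the paper, even though both start from the same Gr\"obner basis of $Q_k$.

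The paper fixes the lex order $x_{1n}>\cdots>x_{nn}>x_{ij}$ $(i,j\neq n)$, verifies that the natural generating set of $Q_n$ is a Gr\"obner basis via the same two families of $S$-polynomials you sketch, and then invokes the Gianni--Trager--Zacharias machinery: since the leading terms involve only variables outside $P_n$ and in particular are coprime to $t=x_{nn}$, one gets $Q_nR_{P_n}\cap R=Q_nR_t\cap R$, and an elimination computation with $zt-1$ shows $Q_nR_t\cap R=Q_n$; hence $Q_n$ coincides with its own $P_n$-primary component. You instead identify the initial ideal explicitly as $\initial(Q_k)=P_k^2$, observe that $R/P_k^2\cong\Bbbk[x_{kk}]\otimes_\Bbbk\Bbbk[u_1,\dots,u_d]/(u_1,\dots,u_d)^2$ is one-dimensional Cohen--Macaulay, transfer this to $R/Q_k$ via the standard inequalities $\depth(R/Q_k)\geq\depth(R/\initial Q_k)$ and $\dim(R/Q_k)=\dim(R/\initial Q_k)$, and conclude primariness from unmixedness of Cohen--Macaulay rings. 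This buys you a more self-contained argument (depth semicontinuity under Gr\"obner degeneration in place of the two GTZ citations), and as a bonus records the cleaner structural fact $\initial(Q_k)=P_k^2$, which the paper leaves implicit. Your alternative route---localizing at $x_{kk}$, eliminating each $x_{ij}$ $(i,j\neq k)$ to get $\Bbbk[x_{kk}^{\pm1},x_{ki}]/(x_{ki})^3$, and combining with $x_{kk}$ being a nonzerodivisor modulo $Q_k$---is much closer in spirit to the paper's use of localization, but replaces the GTZ black box by a direct and transparent computation. Both of your arguments are sound; the only place to be pedantic is the claim that the term order among $x_{kk}$ and the $x_{ij}$ with $i,j\neq k$ is immaterial, which is fine here precisely because the leading term of each binomial is forced by the $x_{ki}$ factors alone.
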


\begin{proof}
    By symmetry, we can assume $k=n$. Consider the lex monomial ordering where $x_{1n}>x_{2n}>\cdots >x_{nn}> x_{ij}$ for any $i,j\neq n$. Set 
    \[
    A=\{x_{in}x_{jn}+x_{nn}x_{ij},\ x_{ij} x_{i'j'},\ x_{ij}x_{i'n}\mid\  i,j,i',j'\in [n-1]\}.
    \]
    It is clear that $Q_n=(A)$. Set $t=x_{nn}$. We establish the following:
    \begin{enumerate}
        \item $A$ is a Gr\"obner basis;
        \item the leading coefficients of elements of $A$ do not involve $t$.
    \end{enumerate}
    Note that $t\notin P_n$. By (1) and (2), we have $\initial(Q_n)R_{P_n} \cap R = \initial(Q_n)R_t\cap R$, and hence $Q_nR_{P_n}\cap R=Q_nR_t\cap R$ by \cite[Proposition 3.6]{GTZ88}. We have
    \begin{align*}
        Q_nR_t\cap R &= (A)R_t \cap R  \tag{by (1)}\\
        &=(A,\ zt-1)R[z] \cap R  \tag{by the proof of \cite[Corollary 3.2 (v)]{GTZ88}}.
    \end{align*}
    Due to (1) and (2), $A\cup \{zt-1\}$ is a Gr\"obner basis as well. Thus by elimination theorem (see, e.g., \cite[Theorem 2.3.4]{Adams2012AnIT}), we have $Q_nR_t\cap R = (A)=Q_n$. In other words, $Q_n$ is its own $P_n$-primary component, and hence is primary, in particular, as desired. 
    
    It now suffices to prove (1) and (2). It is clear that (2) follows immediately from (1). To show (1), we will show that $S(f,g)$ reduces to $0$ with respect to $A$, for any $f,g\in A$. Indeed, it is known that $S(f,g)$ reduces to $0$ when $f$ and $g$ are both monomials (by definition), or when the leading terms of $f$ and $g$ are coprime (\cite[Lemma 3.3.1]{Adams2012AnIT}). We will verify the rest:
    \begin{align*}
        &S(x_{in}x_{jn}+x_{nn}x_{ij}, x_{in}x_{j'n}+x_{nn}x_{ij'}) = x_{nn}(x_{j'n}x_{ij}- x_{jn}x_{ij'}) \xrightarrow{x_{j'n}x_{ij}}  -x_{nn}x_{jn}x_{ij'} \xrightarrow{x_{jn}x_{ij'}} 0,\\
        &S(x_{in}x_{jn}+x_{nn}x_{ij}, x_{in}x_{i'j}) = x_{nn}x_{i'j}x_{ij} \xrightarrow{x_{i'j}x_{ij}} 0,
    \end{align*}
    for any $i,j,i',j'\in [n-1]$, as desired.
\end{proof}

When $n=2$, the ideal $P_2(X)$ is a hypersurface defined by an irreducible polynomial $x_{11}x_{22}+x_{12}^2$, and $P_2(X)$ is its own primary decomposition. For the rest of the section, we assume that $n>2$. We now show that $P_k$, for any $k\in [n]$, and the homogeneous maximal ideal $\mathfrak{m}=(x_{ij}\mid i,j\in [n])$, are associated primes of $P_2(X)$. First we recall a lemma.

\begin{lemma}\label{lem:primary-colon-equality}
    Let $Q$ be a primary ideal of $R$ and $x\notin \sqrt Q$ an element of $R$. Then $(Q\colon x)=Q$.
\end{lemma}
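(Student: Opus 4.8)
The plan is to unwind the definitions directly; this is a standard property of primary ideals and requires no machinery beyond the definition. First I would record the trivial inclusion $Q\subseteq (Q\colon x)$: since $Q$ is an ideal, every $q\in Q$ satisfies $qx\in Q$, so $q\in(Q\colon x)$. The entire content of the statement is therefore the reverse inclusion $(Q\colon x)\subseteq Q$.

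For that inclusion I would take an arbitrary element $r\in (Q\colon x)$, so by definition $rx\in Q$. Now I invoke the defining property of a primary ideal: if a product $ab\in Q$, then either $a\in Q$ or $b\in\sqrt{Q}$ (equivalently, $b^{m}\in Q$ for some $m\geq 1$). Applying this with $a=r$ and $b=x$, and using the hypothesis $x\notin\sqrt{Q}$, the second alternative is impossible, so $r\in Q$. This yields $(Q\colon x)\subseteq Q$, and combined with the first paragraph gives $(Q\colon x)=Q$.

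I do not anticipate any genuine obstacle here: the only point requiring attention is to apply the hypothesis $x\notin\sqrt{Q}$ at exactly the step where the primary condition offers the dichotomy, so that the "$b\in\sqrt{Q}$" branch is ruled out. The lemma is used afterwards to detect that certain colon ideals do not collapse, which is what forces the embedded components in the primary decomposition of $P_2(X)$, but for the proof of the lemma itself nothing beyond the definition is needed.
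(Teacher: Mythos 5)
Your proof is correct and follows essentially the same argument as the paper: establish the trivial inclusion $Q\subseteq(Q\colon x)$, then take $r\in(Q\colon x)$ and use the defining dichotomy of a primary ideal together with $x\notin\sqrt{Q}$ to force $r\in Q$. No issues.
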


\begin{proof}
    It is clear that $Q\subseteq (Q\colon x)$. Conversely, consider $y\in (Q\colon x)$, i.e., $yx\in Q$. Since $x\notin \sqrt Q$ and $Q$ is $\sqrt Q$-primary, we have $y\in Q$. In other words,  $Q\supseteq (Q\colon x)$. The result then~follows.
\end{proof}

\begin{proposition}\label{prop:associated-primes-P2}
    Let $X$ be an $n\times n$ symmetric matrix with indeterminates as entries, where $n\geq 3$. Then
    \begin{enumerate}
        \item $P_2(X)\colon x_{12}^2x_{33} = \mathfrak{m}$;
        \item $P_2(X)\colon x_{ij}x_{kk}^2 = P_k$, for any distinct $i,j,k$.
    \end{enumerate}
    In particular, $\mathfrak{m}$ and $P_k$, where $k\in [n]$, are associated primes of $P_2(X)$.
\end{proposition}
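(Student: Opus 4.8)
The plan is to prove the two colon-ideal equalities directly, since the ``in particular'' statement follows immediately: once we know $P_2(X) \colon x_{12}^2 x_{33} = \mathfrak m$ and $P_2(X) \colon x_{ij}x_{kk}^2 = P_k$, the definition of associated prime gives that $\mathfrak m$ and each $P_k$ are associated to $P_2(X)$ (using $n\geq 3$ so that $x_{12}^2x_{33} \notin P_2(X)$, which follows from the Gröbner basis computation already noted in the proof of Theorem~\ref{thm:symmetric-minimal-primes}, and similarly $x_{ij}x_{kk}^2\notin P_2(X)$).

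For part (1), the inclusion $P_2(X)\colon x_{12}^2x_{33} \subseteq \mathfrak m$ holds because $P_2(X)$ is homogeneous and $x_{12}^2x_{33}\notin P_2(X)$, so the colon ideal is a proper homogeneous ideal, hence contained in $\mathfrak m$. The reverse inclusion $\mathfrak m \subseteq P_2(X)\colon x_{12}^2 x_{33}$ means showing $x_{12}^2x_{33}\cdot x_{ab}\in P_2(X)$ for every variable $x_{ab}$. This is essentially the computation $f\mathfrak m \subseteq P_2(X)$ for $f = x_{12}x_{13}x_{23}$ that was invoked in the proof of Theorem~\ref{thm:symmetric-minimal-primes}, but here with $f$ replaced by $x_{12}^2x_{33}$; I would run through the cases on $x_{ab}$ using Lemma~\ref{lem:three-element-in-permanent}, Lemma~\ref{lem:three-element-in-permanent-2}, and Lemma~\ref{lem:three-element-symmetric}. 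For instance $x_{12}^2x_{33}x_{33} = x_{12}^2x_{33}^2 \in P_2(X)$ by Lemma~\ref{lem:three-element-symmetric}; $x_{12}^2x_{33}x_{12} = x_{12}^3x_{33}\in P_2(X)$ by the same lemma; $x_{12}^2x_{33}x_{13}$ contains $x_{12}^2 x_{13}\in P_2(X)$ (Lemma~\ref{lem:three-element-symmetric}); $x_{12}^2x_{33}x_{ab}$ with $a,b\notin\{1,2,3\}$ is handled since $x_{12}x_{ab}\in P_2(X)$ by Lemma~\ref{lem:quadratic-generator} when all four indices are distinct, etc. The symmetry of $X$ (so each listed variable sits in two positions) is what makes all cases reducible to the auxiliary lemmas.

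For part (2), by symmetry take $k=n$ and show $P_2(X)\colon x_{ij}x_{nn}^2 = P_n$ for distinct $i,j\in[n-1]$. One inclusion: since $Q_n$ is $P_n$-primary (Proposition~\ref{prop:Qk-primary}) and $P_2(X)\subseteq Q_n$, we get $P_2(X)\colon x_{ij}x_{nn}^2 \subseteq Q_n \colon x_{ij}x_{nn}^2 = Q_n$ (the last step by Lemma~\ref{lem:primary-colon-equality}, since $x_{ij}x_{nn}^2\notin P_n$ as both $x_{ij}$ and $x_{nn}$ avoid $P_n$); but actually I want to land in $P_n$, so I should instead argue that the colon ideal is prime-candidate-bounded: it suffices to show $P_2(X)\colon x_{ij}x_{nn}^2 \subseteq P_n$ and $P_n \subseteq P_2(X)\colon x_{ij}x_{nn}^2$. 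The reverse inclusion $P_n \subseteq P_2(X)\colon x_{ij}x_{nn}^2$ requires $x_{ab}x_{ij}x_{nn}^2 \in P_2(X)$ for every variable $x_{ab}$ with $(a,b)\neq(n,n)$; this is again a finite case check using Lemmas~\ref{lem:three-element-in-permanent}, \ref{lem:three-element-in-permanent-2}, \ref{lem:three-element-symmetric}, \ref{lem:quadratic-generator} — e.g.\ $x_{nn}^2x_{ij}x_{in}$ uses that $x_{in}x_{ij}x_{nn}$ is of Gröbner type and lies in $P_2(X)$, $x_{nn}^2 x_{ij}^2$ lies in $P_2(X)$ by Lemma~\ref{lem:three-element-symmetric}, and so on. For the forward inclusion $P_2(X)\colon x_{ij}x_{nn}^2 \subseteq P_n$: since $P_2(X)\subseteq Q_n$ and $Q_n$ is $P_n$-primary with $x_{ij}x_{nn}^2\notin P_n$, Lemma~\ref{lem:primary-colon-equality} gives $P_2(X)\colon x_{ij}x_{nn}^2 \subseteq Q_n\colon x_{ij}x_{nn}^2 = Q_n$; combined with the fact that the colon ideal is homogeneous and that one checks directly it cannot contain $x_{nn}$ (since $x_{nn}^3 x_{ij}\notin P_2(X)$ — verify via the Gröbner basis $G$), together with the structure of $Q_n$ one concludes it equals $P_n$.

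The main obstacle I anticipate is organizing the forward inclusion in part (2) cleanly: showing $P_2(X)\colon x_{ij}x_{nn}^2$ is exactly $P_n$ and not something strictly between $P_2(X)$ and $Q_n$. The cleanest route is probably: the colon ideal contains $P_n$ (reverse inclusion, done by hand) and is contained in $Q_n$ (via primariness); since $P_n = \sqrt{Q_n}$ and $Q_n \supsetneq P_n$ only by the ``mixed'' relations $x_{nn}x_{ij}+x_{in}x_{jn}$, one checks none of those extra elements divide-kills $x_{ij}x_{nn}^2$ into $P_2(X)$ — equivalently, verify $x_{nn}^2x_{ij}\cdot(x_{nn}x_{ab}+x_{na}x_{nb})$-type products are not forced, i.e.\ the colon ideal meets the relevant graded pieces only in $P_n$. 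All the heavy lifting is finite bookkeeping with the four auxiliary lemmas plus the Gröbner basis $G$; no genuinely new idea is needed, which is why I would present part (1) in full and part (2) by the primary-component sandwich argument, relegating the variable-by-variable checks to ``one verifies directly.''
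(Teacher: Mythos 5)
Your part (1) matches the paper's proof. Your part (2) contains a genuine error in the forward inclusion.

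You claim that $x_{ij}x_{nn}^2 \notin P_n$ ``as both $x_{ij}$ and $x_{nn}$ avoid $P_n$,'' but this is false: $P_n = (x_{ab} \mid (a,b)\neq (n,n))$ contains every variable except $x_{nn}$, so $x_{ij} \in P_n$ (since $(i,j)\neq (n,n)$) and hence $x_{ij}x_{nn}^2 \in P_n$. Consequently Lemma~\ref{lem:primary-colon-equality} cannot be applied to $Q_n\colon x_{ij}x_{nn}^2$ all at once. The paper sidesteps this by splitting the colon: $Q_n\colon x_{ij}x_{nn}^2 = (Q_n\colon x_{nn}^2)\colon x_{ij} = Q_n\colon x_{ij}$, where only the middle step uses Lemma~\ref{lem:primary-colon-equality} (legitimately, since $x_{nn}\notin P_n$). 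The remaining step $Q_n\colon x_{ij}\subseteq P_n$ is the real content, and the paper handles it by a contraposition argument using the Gröbner basis of $Q_n$ from Proposition~\ref{prop:Qk-primary}: any homogeneous $f\notin P_n$ is, modulo $P_n$, a power $x_{nn}^r$, and then $\initial(fx_{ij}) = x_{ij}x_{nn}^r$ is not in $\initial(Q_n)$, so $fx_{ij}\notin Q_n$.

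Your ``sandwich'' heuristic also has the containment between $P_n$ and $Q_n$ reversed. Since $\sqrt{Q_n}=P_n$, we have $Q_n\subseteq P_n$, not $Q_n\supsetneq P_n$. Thus ``colon ideal contains $P_n$ and is contained in $Q_n$'' would force $P_n\subseteq Q_n\subseteq P_n$, i.e.\ $P_n=Q_n$, which is false. The observation that $x_{nn}\notin P_2(X)\colon x_{ij}x_{nn}^2$ is true but does not by itself pin down the colon ideal as $P_n$: you need the upper bound by $P_n$ (not by $Q_n$), and for that the Gröbner basis argument on $Q_n\colon x_{ij}$ seems unavoidable.
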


\begin{proof}
    \begin{enumerate}
        \item $(\subseteq):$ Due to the Gr\"obner basis of $P_2(X)$ in Theorem~\ref{thm:Grobner-symmetric}, the monomial $x_{12}^2x_{33}$ is not in the initial ideal of $P_2(X)$ with respect to the diagonal monomial ordering. In particular, this means that  $x_{12}^2x_{33}\notin P_2(X)$. Thus  $P_2(X)\colon x_{12}^2x_{33} \neq R$, i.e., $P_2(X)\colon x_{12}^2x_{33}\subseteq \mathfrak{m}$.

        $(\supseteq):$ We want to show that $x_{12}^2x_{33}x_{ij}\in P_2(X)$ for any $i,j\in [n]$. Indeed, it is important to note that $x_{12}=x_{21}$. By Lemmas~\ref{lem:three-element-in-permanent} and \ref{lem:three-element-in-permanent-2}, the result would follow if $x_{ij}, x_{12}, x_{33}$ are three distinct entries in at least two different rows (resp, columns) and three different columns (resp, rows). Therefore, the only remaining cases are when $x_{ij}, x_{12}, x_{33}$ are two distinct entries, i.e., $x_{ij}$ equals $x_{12}$ or $x_{33}$. In this case, the result follows from Lemma~\ref{lem:three-element-symmetric}, as desired.
        
        \item By symmetry, we can assume that $i=1, j=2,$ and $k=n$. We thus want to show that $P_2(X)\colon x_{12}x_{nn}^2 = P_n$.
        
        $(\supseteq):$ The arguments are similar to those above. We want to show that $x_{12}x_{nn}^2x_{ij}$ for any $i,j\in [n]$ such that $(i,j)\notin (n,n)$. The result would follow from Lemmas~\ref{lem:three-element-in-permanent} and \ref{lem:three-element-in-permanent-2} if $x_{12}, x_{nn}, x_{ij}$ are three distinct entries. The remaining case is when $x_{ij}=x_{12}$, in which case the result follows from Lemma~\ref{lem:three-element-symmetric}.

        $(\subseteq):$ We have 
        \[
        P_2(X)\colon x_{12}x_{nn}^2\subseteq Q_n\colon x_{12}x_{nn}^2 = (Q_n\colon x_{nn}^2) \colon x_{12} = Q_n\colon x_{12},
        \]
        where the last equality follows from Lemma~\ref{lem:primary-colon-equality}. It now suffices to show that $Q_n\colon x_{12}\subseteq P_n$. We will show this by contraposition. Consider a homogeneous polynomial $f\notin P_n$. Modulo $P_n$, we can assume that $f=x_{nn}^r$ for some $r>0$. We consider the lex monomial ordering where $x_{1n}>x_{2n}>\cdots >x_{nn}> x_{ij}$ for any $i,j\neq n$. From the proof of Proposition~\ref{prop:Qk-primary}, the set of generators of $Q_n$
        \[
        \{x_{in}x_{jn}+x_{nn}x_{ij},\ x_{ij} x_{i'j'},\ x_{ij}x_{i'n}\mid\  i,j,i',j'\in [n-1]\}
        \]
        is a Gr\"obner basis with respect to this ordering. We then have $\initial(fx_{12})=x_{12}x_{nn}^r\notin \initial(Q)$. In particular, this implies that $fx_{12}\notin Q_n$, or equivalently, $f\notin Q\colon x_{12}$, as~desired.\qedhere
    \end{enumerate}
\end{proof}

Couple these associated primes with the minimal primes we found in Theorem~\ref{thm:symmetric-minimal-primes}, we have obtained all associated primes of $P_2(X)$. We will prove this by showing a primary decomposition of $P_2(X)$. We recall the following lemma.

\begin{lemma}[{\cite[Fact 4.2]{GGS07}}]\label{lem:primary-procedure}
    Let $I$ be a homogeneous ideal of $R$, and $x\notin \sqrt{I}$ an element of $R$. Then there exists an integer $n$ such that
    \[
    (I\colon x^n) =(I\colon x^{n+1}).
    \]
    For such an $n$, we have
    \[
    I=(I\colon x^n)\cap (I+(x^n)).
    \]
\end{lemma}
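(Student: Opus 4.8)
The plan is to first establish the stabilization of the colon chain using Noetherianity, and then deduce the decomposition by a short element-chasing argument.

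First I would invoke that $R=\Bbbk[X]$ is Noetherian, so the ascending chain of ideals
\[
(I\colon x)\subseteq (I\colon x^2)\subseteq (I\colon x^3)\subseteq \cdots
\]
stabilizes. This produces an integer $n$ with $(I\colon x^n)=(I\colon x^{n+1})$, and a routine induction upgrades this to $(I\colon x^n)=(I\colon x^m)$ for every $m\geq n$. In particular $(I\colon x^{2n})=(I\colon x^n)$, which is the identity I will use below.

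For such an $n$, I would then prove $I=(I\colon x^n)\cap (I+(x^n))$. The inclusion $\subseteq$ is immediate, since $I$ is contained in both $(I\colon x^n)$ and $I+(x^n)$. For the reverse inclusion, take $f$ in the intersection and write $f=g+hx^n$ with $g\in I$ and $h\in R$. Since $f\in (I\colon x^n)$ we have $fx^n\in I$, whence $hx^{2n}=fx^n-gx^n\in I$ using $g\in I$. Therefore $h\in (I\colon x^{2n})=(I\colon x^n)$, so $hx^n\in I$, and finally $f=g+hx^n\in I$, as desired.

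I do not expect a genuine obstacle here, as the argument is purely formal; the only point requiring a moment of care is that the stabilization must be invoked at exponent $2n$ rather than at $n+1$, which is legitimate precisely because the chain is constant from index $n$ onward. It is perhaps worth noting that neither the hypothesis that $I$ is homogeneous nor that $x\notin\sqrt I$ is actually needed for the proof itself; these serve only to make the resulting decomposition informative, since otherwise one could have $x^n\in I$, collapsing the statement to $I=R\cap I$.
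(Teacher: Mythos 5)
Your proof is correct. The paper does not prove this lemma itself but cites it as \cite[Fact 4.2]{GGS07}; the argument you give is the standard one — stabilization of the ascending colon chain via Noetherianity, followed by the element-chase through $(I\colon x^{2n})=(I\colon x^n)$ — and your closing remark that homogeneity and $x\notin\sqrt I$ are not needed for the identity (only to make it informative) is accurate.
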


This result provides a pathway towards finding the primary decomposition of an ideal $I$, by instead determining the primary decomposition of the bigger ideals $I\colon x^n$ and $I+(x^n)$. It is worth noting that this procedure does not imply that any ideal in the process is primary. In our case, Lemma~\ref{prop:Qk-primary} will become essential for this reason. Lemma~\ref{lem:primary-procedure} also comes with a strong hypothesis regarding an equality between colon ideals. The next result is how we will obtain this~hypothesis.

\begin{lemma}\label{lem:colon-equality}
    Let $I$ and $Q$ be ideals of $R$, $x$ an element in $R$, and $n$ a positive integer. Assume that the following holds:
    \begin{enumerate}
        \item $Q$ is $P$-primary where $P=\sqrt{Q}$ is a prime ideal;
        \item $x\notin P$;
        \item $I\subseteq Q$;
        \item $Q\subseteq (I\colon x^n)$.
    \end{enumerate}
    Then we have $Q=(I\colon x^n)=(I\colon x^{n+1})$.
\end{lemma}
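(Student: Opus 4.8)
The plan is to prove both equalities at once by closing a cyclic chain of inclusions among the three ideals $Q$, $(I\colon x^n)$, and $(I\colon x^{n+1})$. Two of the needed inclusions are immediate: hypothesis (4) gives $Q\subseteq (I\colon x^n)$, and the general fact that $ax^n\in I$ implies $ax^{n+1}\in I$ gives $(I\colon x^n)\subseteq (I\colon x^{n+1})$. Thus we already have
\[
Q\subseteq (I\colon x^n)\subseteq (I\colon x^{n+1}),
\]
and it suffices to prove the single reverse inclusion $(I\colon x^{n+1})\subseteq Q$, which will force the three ideals to coincide.

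For that inclusion, I would take $a\in (I\colon x^{n+1})$, so $ax^{n+1}\in I$, and then use hypothesis (3), $I\subseteq Q$, to conclude $ax^{n+1}\in Q$, i.e.\ $a\in (Q\colon x^{n+1})$. At this point I invoke Lemma~\ref{lem:primary-colon-equality} applied to the $P$-primary ideal $Q$ and the element $x^{n+1}$: since $\sqrt{Q}=P$ is prime (hypothesis (1)) and $x\notin P$ (hypothesis (2)), we have $x^{n+1}\notin P=\sqrt{Q}$ as well, so $(Q\colon x^{n+1})=Q$. Hence $a\in Q$, proving $(I\colon x^{n+1})\subseteq Q$ and completing the argument.

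There is no real obstacle here; the only subtlety is that one needs $x^{n+1}\notin\sqrt{Q}$, not merely $x\notin\sqrt{Q}$, and this is precisely the point at which the primeness of $P$ in hypothesis (1) is used (for a prime $P$, $x\notin P$ if and only if $x^{n+1}\notin P$). Combining this with the easy inclusions above yields $Q=(I\colon x^n)=(I\colon x^{n+1})$.
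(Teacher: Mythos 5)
Your proof is correct and follows essentially the same argument as the paper: you establish the chain $Q\subseteq (I\colon x^n)\subseteq (I\colon x^{n+1})\subseteq (Q\colon x^{n+1})=Q$, closing the cycle via Lemma~\ref{lem:primary-colon-equality} applied to $x^{n+1}$, just as the paper does. Your explicit remark that primeness of $P$ is what upgrades $x\notin P$ to $x^{n+1}\notin P$ matches the paper's opening observation.
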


\begin{proof}
    First of all we observe that no power of $x$ is in $P$ as $P$ is prime and $x\notin P$. By Lemma~\ref{lem:primary-colon-equality}, we have $Q=(Q\colon x^{n+1})$. Therefore, we have
    \[
    Q\subseteq (I\colon x^n) \subseteq (I\colon x^{n+1}) \subseteq (Q\colon x^{n+1})=Q.
    \]
    Thus the result follows.
\end{proof}

Many colon ideals will appear when we apply Lemma~\ref{lem:primary-procedure}. For this reason, we will provide some equalities below.

Set $\Omega_{\off}=\{x_{ij}\mid i,j\in [n], i\neq j\}$ and $\Omega_{\on}=\{x_{ii}\mid i\in [n]\}$. In other words, $\Omega_{\off}$ is the set of off-diagonal entries of $X$, and $\Omega_{\on}$ is the set of diagonal entries. For a set $\sigma$ of variables ad an integer $n$, let $\sigma^{[n]}$ denote the set $\{x^n\mid x\in \sigma\}$.

\begin{proposition}\label{prop:primary-equality}
    For each $\sigma\subset \Omega_{\off}$ and $x_{ij}\in \Omega_{\off}\setminus \sigma$ where $i\neq j$, we have
    \[
    P_{ij}=(P_2(X) + (\sigma^{[3]})) : x_{ij}^3 = (P_2(X) + (\sigma^{[3]})) : x_{ij}^4.
    \]
\end{proposition}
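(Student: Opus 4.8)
The plan is to verify the two colon-ideal equalities by applying Lemma~\ref{lem:colon-equality} with $Q = P_{ij}$, $P = P_{ij}$ (which is prime by Theorem~\ref{thm:symmetric-minimal-primes}), $I = P_2(X) + (\sigma^{[3]})$, $x = x_{ij}$, and $n = 3$. Once the four hypotheses of that lemma are checked, we get $P_{ij} = (I : x_{ij}^3) = (I : x_{ij}^4)$ immediately. Hypothesis (2), namely $x_{ij} \notin P_{ij}$, is clear since $P_{ij}$ is generated by $x_{ii}x_{jj}+x_{ij}^2$ together with all entries of $X$ outside the $2\times 2$ submatrix in rows/columns $i,j$, and $x_{ij}$ is one of the four ``inside'' variables not appearing among those generators (one can see $x_{ij}\notin P_{ij}$ by passing to $\Bbbk[x_{ii},x_{ij},x_{jj}]/(x_{ii}x_{jj}+x_{ij}^2)$, a domain in which $x_{ij}\neq 0$). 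Hypothesis (3), $I \subseteq P_{ij}$, requires $P_2(X) \subseteq P_{ij}$ (true, as $P_{ij}$ is a minimal prime of $P_2(X)$) together with $\sigma^{[3]} \subseteq P_{ij}$: each $x_{kl} \in \sigma \subseteq \Omega_{\off}\setminus\{x_{ij}\}$ is an off-diagonal variable $\neq x_{ij}$, hence lies outside the submatrix (an off-diagonal entry of the $2\times 2$ symmetric submatrix in rows $i,j$ is only $x_{ij}=x_{ji}$), so $x_{kl}\in P_{ij}$ and a fortiori $x_{kl}^3\in P_{ij}$.

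The substance is hypothesis (4): $P_{ij} \subseteq (I : x_{ij}^3)$, i.e., $x_{ij}^3 \cdot g \in P_2(X) + (\sigma^{[3]})$ for every generator $g$ of $P_{ij}$. There are two kinds of generators. For $g = x_{ii}x_{jj} + x_{ij}^2$: we have $x_{ij}^3(x_{ii}x_{jj}+x_{ij}^2) = x_{ij}\cdot x_{ij}^2(x_{ii}x_{jj}+x_{ij}^2)$, and $x_{ij}(x_{ii}x_{jj}+x_{ij}^2) \in P_2(X)$ already since $x_{ii}x_{jj}+x_{ij}^2$ is literally a generator (type $(1a)$); so this case needs no power of $x_{ij}$ at all. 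For $g = x_{kl}$ an entry outside the submatrix, I need $x_{ij}^3 x_{kl} \in P_2(X) + (\sigma^{[3]})$. Here $\{k,l\}$ is either disjoint from $\{i,j\}$, or shares exactly one index; also $x_{kl}$ may be on- or off-diagonal. The cleanest route is Lemmas~\ref{lem:three-element-in-permanent}, \ref{lem:three-element-in-permanent-2}, \ref{lem:three-element-symmetric}, and \ref{lem:quadratic-generator}, exploiting that each variable of a symmetric matrix occupies two symmetric positions:
\begin{itemize}
\item If $\{i,j\}\cap\{k,l\}=\emptyset$ and $x_{kl}$ is off-diagonal, then $x_{ij}x_{kl}\in P_2(X)$ by Lemma~\ref{lem:quadratic-generator}, so certainly $x_{ij}^3x_{kl}\in P_2(X)$.
\item If $\{i,j\}\cap\{k,l\}=\emptyset$ and $x_{kl}=x_{mm}$ is on-diagonal with $m\notin\{i,j\}$, then $x_{ij}^2x_{mm}\in P_2(X)$ by Lemma~\ref{lem:three-element-in-permanent-2} (two distinct off-diagonal occurrences of $x_{ij}$ in rows $i,j$, columns $j,i$, and $x_{mm}$ in row/column $m$), hence $x_{ij}^3x_{mm}\in P_2(X)$.
\item If $\{i,j\}\cap\{k,l\}$ is a single index, say $x_{kl}=x_{ik}$ with $k\notin\{i,j\}$ (the other sub-cases, including $x_{kl}=x_{kk}$ with $k$ equal to $i$ or $j$, are analogous): using $x_{ij}=x_{ji}$ we place $x_{ij},x_{ij},x_{ik}$ in rows $i,j$ and columns $j,i,k$, so by Lemma~\ref{lem:three-element-symmetric} (the $x_{ij}^2x_{ik}$ form) we get $x_{ij}^2x_{ik}\in P_2(X)$, hence $x_{ij}^3x_{ik}\in P_2(X)$.
\end{itemize}
I do not in fact need the set $\sigma$ to handle these products — in every case the conclusion lands in $P_2(X)$ itself — but including $(\sigma^{[3]})$ costs nothing and is harmless. (If a cleaner uniform argument is wanted: $x_{ij}^2 x_{kl}\in P_2(X)$ whenever $x_{kl}\neq x_{ij}$, which follows from Lemmas~\ref{lem:three-element-in-permanent-2}, \ref{lem:three-element-symmetric}, and \ref{lem:quadratic-generator} by checking the handful of position patterns for a symmetric matrix, and then $x_{ij}^3 x_{kl}\in P_2(X)$ a fortiori.)

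The main obstacle is the bookkeeping in hypothesis (4): one must enumerate the possible positions of the ``outside'' generator $x_{kl}$ relative to the fixed pair $\{i,j\}$ and, crucially, remember that symmetry gives every variable two slots, so that a product like $x_{ij}^2 x_{ik}$ which ``looks'' like three entries in rows $i,j$ and two columns $i,j,k$ is covered by Lemma~\ref{lem:three-element-symmetric}. No genuinely new idea is required beyond Section~\ref{sec:2} and the fact (from Theorem~\ref{thm:Grobner-symmetric} and Proposition~\ref{prop:Qk-primary}) that $P_{ij}$ is prime; the equality of the two successively larger colon ideals is then automatic from the ascending-chain argument inside Lemma~\ref{lem:colon-equality}.
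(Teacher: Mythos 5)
Your overall strategy is the same as the paper's: invoke Lemma~\ref{lem:colon-equality} with $Q=P=P_{ij}$, $I=P_2(X)+(\sigma^{[3]})$, $x=x_{ij}$, $n=3$, and check the four hypotheses. Hypotheses (1)--(3) are handled correctly, and most of your case analysis for hypothesis (4) is sound. However, your second bullet contains a genuine error: you claim that $x_{ij}^2x_{mm}\in P_2(X)$ for $m\notin\{i,j\}$ by Lemma~\ref{lem:three-element-in-permanent-2}. That lemma concerns degree-four products $x_{i_1j_1}^2x_{i_2j_2}x_{i_3j_3}$, so it simply does not apply to the degree-three monomial $x_{ij}^2x_{mm}$; and in fact this monomial is \emph{not} in $P_2(X)$. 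The paper explicitly verifies this for $x_{13}^2x_{22}$ in the depth computation of Theorem~\ref{thm:symmetric-minimal-primes} (it is a standard monomial under the Gr\"obner basis of Theorem~\ref{thm:Grobner-symmetric}), and the non-membership of precisely such monomials is what produces the embedded associated primes $P_k$ and $\mathfrak{m}$. For the same reason, the ``cleaner uniform argument'' in your parenthetical --- that $x_{ij}^2x_{kl}\in P_2(X)$ whenever $x_{kl}\neq x_{ij}$ --- is false.

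The fix is what the paper does: the needed conclusion $x_{ij}^3x_{mm}\in P_2(X)$ is true, but you must cite Lemma~\ref{lem:three-element-symmetric} directly (its $x_{ij}^3x_{kk}$ form, a degree-four product genuinely covered by Lemma~\ref{lem:three-element-in-permanent-2}), not factor through the false claim $x_{ij}^2x_{mm}\in P_2(X)$. So the exponent $3$ on $x_{ij}$ is doing real work here and cannot be dropped to $2$ for the diagonal outside entries. One small additional slip: your parenthetical about ``$x_{kl}=x_{kk}$ with $k$ equal to $i$ or $j$'' describes an entry \emph{inside} the distinguished $2\times 2$ submatrix, so that sub-case does not arise among the generators of $P_{ij}$.
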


\begin{proof}
    It suffices to show the four conditions in Lemma~\ref{lem:colon-equality} as the result would follow immediately. Indeed, $P_{ij}$ is prime, and thus $P_{ij}$-primary. It is clear that $x_{ij}\notin P$ and $P_2(X)+(\sigma^{[3]})\subseteq P_{ij}$ by definition. It now suffices to show that $P_{ij}\subseteq (P_2(X) + (\sigma^{[3]})) : x_{ij}^3$. Recall~that
    \[
    P_{ij}=(x_{ii}x_{jj}+x_{ij}^2,\ x_{ik}, \ x_{jk}, \ x_{kl} \mid k,l\notin \{i,j\}).
    \]
    It is clear that $x_{ii}x_{jj}+x_{ij}^2\in P_2(X)$, and for each $k,l\notin \{i,j\}$, the variables $x_{ik}$ and $ x_{jk}$ are in $P_2(X) : x_{ij}^3$ by Lemma~\ref{lem:three-element-symmetric}. Moreover, $x_{kl}$ is in $P_2(X)\colon x_{ij}^3$ when $l=k$ by Lemma~\ref{lem:three-element-symmetric}, and also when $l\neq k$ by Lemma~\ref{lem:quadratic-generator}. In other words, we have $P_{ij}\subseteq P_2(X)  : x_{ij}^3\subseteq (P_2(X) + (\sigma^{[3]})) : x_{ij}^3$, as desired.
\end{proof}

\begin{proposition}\label{prop:primary-equality-2}
    For each $\tau \subset \Omega_{\on}$ and $x_{kk}\in \Omega_{\on}\setminus \tau$, we have
    \[
    Q_k=(P_2(X) + (\Omega_{\off}^{[3]}) + (\tau^{[2]})) : x_{kk}^2 = (P_2(X) + (\Omega_{\off}^{[3]}) + (\tau^{[2]})) : x_{kk}^3.
    \]
\end{proposition}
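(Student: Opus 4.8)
The plan is to obtain this proposition as an instance of Lemma~\ref{lem:colon-equality}, in exact parallel with the proof of Proposition~\ref{prop:primary-equality}. Set $I \coloneqq P_2(X) + (\Omega_{\off}^{[3]}) + (\tau^{[2]})$. I would apply Lemma~\ref{lem:colon-equality} with this $I$, with $Q = Q_k$ and $P = P_k = \sqrt{Q_k}$, with the element $x_{kk}$ in the role of $x$, and with $n = 2$; its conclusion $Q_k = (I : x_{kk}^2) = (I : x_{kk}^3)$ is precisely the asserted double equality. So everything reduces to checking the four hypotheses of Lemma~\ref{lem:colon-equality}.

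Hypotheses (1) and (2) are immediate: $Q_k$ is $P_k$-primary by Proposition~\ref{prop:Qk-primary}, $P_k = \sqrt{Q_k}$ is prime (already recorded), and $x_{kk}\notin P_k$ since $P_k = (x_{ij} : (i,j)\neq (k,k))$. For hypothesis (3), $I\subseteq Q_k$, I would treat the three summands separately: $P_2(X)\subseteq Q_k$ was noted; since $x_{kk}\notin\tau$ we have $\tau\subseteq (x_{ij} : i,j\neq k)$, hence $\tau^{[2]}\subseteq (x_{ij} : i,j\neq k)^2\subseteq Q_k$; and for $\Omega_{\off}^{[3]}$, any $x_{ij}^3$ with $i,j\neq k$ already lies in $(x_{ij} : i,j\neq k)^2$, while for $x_{ak}^3$ with $a\neq k$ one uses the generator $x_{kk}x_{aa}+x_{ak}^2$ of $Q_k$ to replace $x_{ak}^2$ by $-x_{kk}x_{aa}$ modulo $Q_k$ and then notes $x_{aa}x_{ak}\in (x_{ij} : i,j\neq k)(x_{ki} : i\neq k)\subseteq Q_k$.

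The substance is hypothesis (4): $g\,x_{kk}^2\in I$ for every generator $g$ of $Q_k$. The binomial generators $x_{kk}x_{ij}+x_{ki}x_{kj}$ ($i,j\neq k$) are themselves $2\times 2$ subpermanents of $X$ — on rows $\{k,i\}$ and columns $\{k,j\}$ — hence already lie in $P_2(X)\subseteq I$, so there is nothing to do for them. For a monomial generator $x_{ab}x_{cd}$ or $x_{ab}x_{ki}$ with all displayed indices in $[n]\setminus\{k\}$, the workhorse is the congruence $x_{ab}x_{kk}\equiv -x_{ak}x_{bk}\pmod{P_2(X)}$, coming from the subpermanent on rows $\{a,k\}$ and columns $\{k,b\}$ (specializing to $x_{aa}x_{kk}\equiv -x_{ak}^2$ when $a=b$). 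Applying this once or twice rewrites $g\,x_{kk}^2$, modulo $P_2(X)$, as a monomial whose off-diagonal factors are all entries of column $k$; rewriting some of these via the symmetry $x_{ak}=x_{ka}$ as entries of row $k$ exhibits the monomial as a product of entries occupying only two distinct columns but three distinct rows, whence Lemma~\ref{lem:three-element-in-permanent} places it in $P_2(X)$. The only configurations where the indices collapse so far that this fails are $x_{aa}^2x_{kk}^2$ and $x_{aa}x_{ak}x_{kk}^2$, which are congruent modulo $P_2(X)$ to $x_{ak}^4$ and $\pm x_{ak}^3 x_{kk}$ respectively — both in $(\Omega_{\off}^{[3]})\subseteq I$. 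This settles (4), and the proposition follows.

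I expect the only real work to be the case analysis inside hypothesis (4): one has to run through how many of the indices $a,b,c,d,i$ coincide and, for each pattern, produce the right subpermanent or cite the appropriate one of Lemmas~\ref{lem:three-element-in-permanent}, \ref{lem:three-element-in-permanent-2} and \ref{lem:three-element-symmetric}. The symmetry of $X$ cuts both ways here: it is exactly what lets a product of column-$k$ entries be rewritten as spanning two distinct columns, but it also means one monomial can appear under several index patterns, so care is needed not to overlook a degenerate one. I would also point out that $(\tau^{[2]})$ never enters hypothesis (4); it is used only via $\tau^{[2]}\subseteq Q_k$ in hypothesis (3), and the proposition is stated for an arbitrary $\tau\subset\Omega_{\on}$ purely so that it can be iterated in the proof of the primary-decomposition theorem.
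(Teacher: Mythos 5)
Your proposal is correct and follows essentially the same route as the paper: both invoke Lemma~\ref{lem:colon-equality} with $Q=Q_k$, $P=P_k$, $x=x_{kk}$, $n=2$, and then check the four hypotheses, the nontrivial one being $Q_k\subseteq I\colon x_{kk}^2$. The only difference is stylistic, in how hypothesis~(4) is verified: the paper runs an explicit five-way case split on $x_{ij}x_{i'j'}$ (plus a separate argument for $x_{ij}x_{kl}$) citing Lemmas~\ref{lem:three-element-in-permanent}, \ref{lem:three-element-in-permanent-2}, \ref{lem:three-element-symmetric} directly, whereas you first uniformly apply the congruence $x_{ab}x_{kk}\equiv -x_{ak}x_{bk}\pmod{P_2(X)}$ to push everything into row/column $k$ and only then invoke Lemma~\ref{lem:three-element-in-permanent} (with the same degenerate cases $x_{ak}^4$ and $x_{ak}^3x_{kk}$ absorbed by $(\Omega_{\off}^{[3]})$), which streamlines the bookkeeping but is the same underlying argument.
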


\begin{proof}
    It suffices to show the four conditions in Lemma~\ref{lem:colon-equality} as the result would follow immediately. Indeed, $Q_k$ is $P_{k}$-primary by Proposition~\ref{prop:Qk-primary}. It is clear that $x_{kk}\notin P_k$. Next we recall that
    \[
    Q_k= (x_{kk}x_{ij}+x_{ki}x_{kj}\mid i,j\neq k)+(x_{ij}\mid i,j\neq k)^2 + (x_{ij}\mid i,j\neq k)(x_{ki}\mid i\neq k).
    \] 
    Consider $x_{ij}^3\in \Omega_{\off}^{[3]}$ for some $i\neq j$. If $i,j\neq k$, then $x_{ij}^3\in Q_k$ by definition. On the other hand, if $i=k$, then $x_{ij}^3=x_{kj}^3=(x_{kk}x_{jj}+x_{kj}^2)(x_{kj}) - (x_{jj}x_{kj})(x_{kk})\in Q_k$. In other words, we have $(\Omega_{\off}^{[3]})\subseteq Q_k$ either way, and thus 
    $P_2(X) + (\Omega_{\off}^{[3]}) + (\tau^{[2]})\subseteq Q_k$. 
    
    It now suffices to show that $Q_{k}\subseteq (P_2(X) + (\Omega_{\off}^{[3]}) + (\tau^{[2]})) : x_{kk}^2$. Consider $i,j\neq k$. We have $(x_{kk}x_{ij}+x_{ki}x_{kj}^2)\in P_2(X) \subseteq (P_2(X) + (\Omega_{\off}^{[3]}) + (\tau^{[2]})) : x_{kk}^2$. Next we show that the monomial $x_{ij}x_{i'j'}$ is in $(P_2(X) + (\Omega_{\off}^{[3]}) + (\tau^{[2]})) : x_{kk}^2$ for any $i',j'\neq k$. Indeed, we have
    \[
    x_{ij}x_{i'j'}x_{kk}^2 = \begin{cases}
        x_{ij}x_{i'j'}x_{kk}^2 &\text{if $i\neq i'$ and $j\neq j'$},\\
        (x_{ij}x_{ij'}x_{kk})(x_{kk}) &\text{if $i= i'$ and $j\neq j'$},\\
        (x_{ij}x_{i'j}x_{kk})(x_{kk}) &\text{if $i\neq i'$ and $j= j'$},\\
        x_{ij}^2x_{kk}^2 &\text{if $i= i'$, $j= j'$, and $i\neq j$},\\
        x_{ii}^2x_{kk}^2 &\text{if $i= i'$, $j= j'$, and $i= j$}.\\
    \end{cases} 
    \]
    We then have $x_{ij}x_{i'j'}x_{kk}^2\in P_2(X)$ in the first case by Lemma~\ref{lem:three-element-in-permanent-2}, in the second and third cases by Lemma~\ref{lem:three-element-in-permanent}, and in the fourth case by Lemma~\ref{lem:three-element-symmetric}. In the fifth case, modulo $P_2(X)$, we have $ x_{ij}x_{i'j'}x_{kk}^2= (x_{ii}x_{kk})^2=(-x_{ik}^2)^2=x_{ik}^4\in \Omega_{\off}^{[3]}$. Therefore, $x_{ij}x_{i'j'}$ is in $(P_2(X) + (\Omega_{\off}^{[3]}) + (\tau^{[2]})) : x_{kk}^2$, as claimed.

    Finally, we show that $x_{ij}x_{kl}$ is in $(P_2(X) + (\Omega_{\off}^{[3]}) + (\tau^{[2]})) : x_{kk}^2$ for any $l\neq k$. Indeed, we must have $l\neq i$ or $l\neq j$, and thus without loss of generality, we assume the latter holds. Then $x_{ij}, x_{kl},$ and $x_{kk}$ are in two different rows and three different columns, and thus $x_{ij}x_{kl}$ is in $(P_2(X) + (\Omega_{\off}^{[3]}) + (\tau^{[2]})) : x_{kk}^2$ by Lemma~\ref{lem:three-element-in-permanent}, as claimed.
    
    To summarize, we have $Q_{k}\subseteq (P_2(X) + (\Omega_{\off}^{[3]}) + (\tau^{[2]})) : x_{kk}^2$, as desired.
\end{proof}

\begin{theorem}\label{thm:primary-decomposition}
    Let $X$ be an $n\times n$ symmetric matrix with indeterminates as entries, where $n\geq 3$. The intersection
    \[
    \left( \bigcap_{1\leq i<j\leq n} P_{ij} \right) \cap \left( \bigcap_{k=1}^n Q_k \right) \cap \left(P_2(X)+(x_{ij}^3\mid 1\leq i<j\leq n)+ (x_{kk}^2\mid k\in [n])\right)
    \]
    is a irredundant primary decomposition of $P_2(X)$.
\end{theorem}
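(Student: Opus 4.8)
The plan is to build the decomposition from the left by iterating the primary procedure of Lemma~\ref{lem:primary-procedure}: first strip off the minimal primes $P_{ij}$ one variable at a time, by taking colons with cubes of off-diagonal variables; then strip off the primary ideals $Q_k$ by taking colons with squares of diagonal variables; and finally recognize what is left as an $\mathfrak m$-primary ideal. Propositions~\ref{prop:primary-equality} and~\ref{prop:primary-equality-2} are designed precisely so that at each stage the hypotheses of Lemma~\ref{lem:primary-procedure} hold, with the stabilizing exponent of the colon chain known explicitly (namely $3$, resp.\ $2$); so once those are available the argument is essentially bookkeeping.

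Concretely, I would enumerate the off-diagonal variables $x_{ij}$ with $i<j$ as $v_1,\dots,v_m$, $m=\binom n2$, writing $v_t=x_{i_tj_t}$, and set $I_0=P_2(X)$ and $I_t=I_{t-1}+(v_t^3)=P_2(X)+(\{v_1,\dots,v_t\}^{[3]})$. At the $t$-th step I apply Proposition~\ref{prop:primary-equality} with $\sigma=\{v_1,\dots,v_{t-1}\}$: this gives $I_{t-1}\subseteq P_{i_tj_t}$ (hence $v_t\notin\sqrt{I_{t-1}}$, since $v_t\notin P_{i_tj_t}$ and the latter is prime) and $(I_{t-1}\colon v_t^3)=(I_{t-1}\colon v_t^4)=P_{i_tj_t}$, whence Lemma~\ref{lem:primary-procedure} yields $I_{t-1}=P_{i_tj_t}\cap I_t$. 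Telescoping gives $P_2(X)=\big(\bigcap_{i<j}P_{ij}\big)\cap\big(P_2(X)+(\Omega_{\off}^{[3]})\big)$. I would then repeat the same scheme on $J_0\coloneqq P_2(X)+(\Omega_{\off}^{[3]})$, now using the diagonal variables and Proposition~\ref{prop:primary-equality-2} with $\tau=\{x_{11},\dots,x_{t-1,t-1}\}$ at step $t$ (note that all of $(\Omega_{\off}^{[3]})$ is already present, which is exactly what that proposition requires), peeling off $Q_t$ with exponent $2$ at each step; primariness of each $Q_t$ comes from Proposition~\ref{prop:Qk-primary}. After all $n$ diagonal steps one arrives at
\[
P_2(X)=\Big(\bigcap_{1\le i<j\le n}P_{ij}\Big)\cap\Big(\bigcap_{k=1}^nQ_k\Big)\cap\big(P_2(X)+(\Omega_{\off}^{[3]})+(\Omega_{\on}^{[2]})\big),
\]
which is the claimed intersection since $(\Omega_{\off}^{[3]})=(x_{ij}^3\mid i<j)$ and $(\Omega_{\on}^{[2]})=(x_{kk}^2\mid k\in[n])$. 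The final factor contains a power of every variable, hence its radical is the maximal ideal $\mathfrak m$, and an ideal with maximal radical is automatically primary; so all three blocks of factors are primary and this is a primary decomposition.

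For irredundancy I would invoke the standard fact that a primary decomposition with pairwise distinct radicals is irredundant if and only if each radical is an associated prime of the ideal. Pairwise distinctness of $\{P_{ij}\}_{i<j}\cup\{P_k\}_k\cup\{\mathfrak m\}$ is read off from which variables lie in each prime: $x_{\ell\ell}\in P_{ij}\iff\ell\notin\{i,j\}$, $x_{\ell\ell}\in P_k\iff\ell\ne k$, and $\mathfrak m$ contains all variables while each $P_{ij},P_k$ omits at least one. That each radical is associated is already recorded: the $P_{ij}$ are the minimal primes of $P_2(X)$ by Theorem~\ref{thm:symmetric-minimal-primes} (hence associated), while $\mathfrak m=(P_2(X)\colon x_{12}^2x_{33})$ and $P_k=(P_2(X)\colon x_{ij}x_{kk}^2)$ are associated by Proposition~\ref{prop:associated-primes-P2}.

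I do not expect a serious obstacle: the real content has been front-loaded into the colon-ideal identities of Propositions~\ref{prop:primary-equality} and~\ref{prop:primary-equality-2} and into the primality of $Q_k$ (Proposition~\ref{prop:Qk-primary}). The only points needing care are (i) checking at each stage that the current ideal is homogeneous (clear, since all generators are) and that the relevant variable is not in its radical, and (ii) respecting the order of operations — all off-diagonal cubes before any diagonal square — since Proposition~\ref{prop:primary-equality-2} requires $(\Omega_{\off}^{[3]})$ to be already in place.
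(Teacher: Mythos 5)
Your proof is correct and takes essentially the same route as the paper: iterate Lemma~\ref{lem:primary-procedure} using the colon identities in Propositions~\ref{prop:primary-equality} and~\ref{prop:primary-equality-2} to peel off first the $P_{ij}$ (off-diagonal cubes) and then the $Q_k$ (diagonal squares), recognize the residual ideal as $\mathfrak{m}$-primary since its radical is the maximal ideal, and deduce irredundancy from the fact that the radicals are pairwise distinct and all occur in $\operatorname{Ass}(P_2(X))$ by Theorem~\ref{thm:symmetric-minimal-primes} and Proposition~\ref{prop:associated-primes-P2}. The only visible difference is your explicit check at each step that $v_t\notin\sqrt{I_{t-1}}$ via the containment $I_{t-1}\subseteq P_{i_tj_t}$ (resp.\ $J_{t-1}\subseteq Q_k$), a hypothesis the paper leaves implicit.
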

\begin{proof}
    The ideal $P_{ij}$ is prime, $Q_{k}$ is primary (Lemma~\ref{prop:Qk-primary}), and $P_2(X)+(x_{ij}^2\mid i,j\in [n])$ is primary since its radical is the homogeneous maximal ideal. Thus the above intersection is indeed a primary decomposition. Next we show that this intersection is indeed $P_2(X)$. By applying Lemma~\ref{lem:primary-procedure} and Proposition~\ref{prop:primary-equality} repeatedly, we obtain:
    \begin{align*}
        P_2(X) &= \left( P_2(X) \colon x_{12}^3 \right) \cap \left( P_2(X)+(x_{12}^3) \right)\\
        &= P_{12} \cap \left( P_2(X)+(x_{12}^3) \right)\\
        &=P_{12}\cap \left( \big( P_2(X)+(x_{12}^3)\big) \colon x_{13}^3 \right) \cap \left(  P_2(X)+(x_{12}^3)+(x_{13}^3) \right)\\
        &=P_{12}\cap P_{13} \cap \left(  P_2(X)+(x_{12}^3,x_{13}^3) \right)\\
        &\cdots\\
        &= \left( \bigcap_{1\leq i<j\leq n} P_{ij} \right) \cap \left(P_2(X)+(\Omega_{\off}^{[3]})\right).
    \end{align*}
    Next we apply  Lemma~\ref{lem:primary-procedure} and Proposition~\ref{prop:primary-equality-2} repeatedly:
    \begin{align*}
        P_2(X) &=\left( \bigcap_{1\leq i<j\leq n} P_{ij} \right) \cap \left(P_2(X)+(\Omega_{\off}^{[3]})\right)\\
        &=\left( \bigcap_{1\leq i<j\leq n} P_{ij} \right) \cap \left( \left(P_2(X)+(\Omega_{\off}^{[3]}) \right) \colon x_{11}^2 \right) \cap  \left(P_2(X)+(\Omega_{\off}^{[3]})  + (x_{11}^2) \right)\\
        &=\left( \bigcap_{1\leq i<j\leq n} P_{ij} \right) \cap Q_1 \cap  \left(P_2(X)+(\Omega_{\off}^{[3]})  + (x_{11}^2) \right)\\
        &\cdots\\
        &=\left( \bigcap_{1\leq i<j\leq n} P_{ij} \right) \cap \left( \bigcap_{k=1}^n Q_k \right) \cap \left(P_2(X)+(x_{ij}^3\mid 1\leq i<j\leq n)+ (x_{kk}^2\mid k\in [n])\right),
    \end{align*}
    as claimed. Finally, we note that the radicals of the components in this decomposition are distinct prime ideals, and these are associated primes of $P_2(X)$ by Theorem~\ref{thm:symmetric-minimal-primes} and Proposition~\ref{prop:associated-primes-P2}. Therefore, the decomposition is irredundant, as desired.
\end{proof}

\section*{Acknowledgement} This paper is an extension on the author's Ph.D thesis \cite{TC}. The author was supported by the NSF grants DMS 1801285, 2101671, and 2001368, and the Infosys Foundation. The author would like to thank his advisor Anurag K. Singh for suggesting this problem and for the constant encouragement and helpful discussions.  The author would like to thank Vaibhav Pandey, Irena Swanson, and Uli Walther for many helpful suggestions. Part of this work was done while the author visited Purdue University. The author would like to thank the Department of Mathematics at Purdue, and especially Annie and Bill Giokas, for their hospitality.
\bibliographystyle{amsplain}
\bibliography{refs}
\end{document}